\newcommand{\R}{\ensuremath{\mathbb{R}}}
\newcommand{\D}{\ensuremath{\mathbb{D}}}
\newcommand{\C}{\ensuremath{\mathbb{C}}}
\newcommand{\sumprime}{\if@display\sideset{}{'}\sum%
            \else\sum'\fi}
\begin{document}

\numberwithin{equation}{section}

\newtheorem{theorem}{Theorem}[section]
\newtheorem{proposition}[theorem]{Proposition}
\newtheorem{conjecture}[theorem]{Conjecture}
\def\theconjecture{\unskip}
\newtheorem{corollary}[theorem]{Corollary}
\newtheorem{lemma}[theorem]{Lemma}
\newtheorem{observation}[theorem]{Observation}
\newtheorem{definition}{Definition}
\numberwithin{definition}{section} 
\newtheorem{remark}{Remark}
\def\theremark{\unskip}
\newtheorem{kl}{Key Lemma}
\def\thekl{\unskip}
\newtheorem{question}{Question}
\def\thequestion{\unskip}
\newtheorem{example}{Example}
\def\theexample{\unskip}
\newtheorem{problem}{Problem}

\thanks{The first author was supported by NSERC (Canada) grant RGPIN-2016-04107. The  second author was supported by Natural Science Foundation of Guangdong Province,  No.  2023A1515030017  and National Key R\&D Program of China, No. 2024YFA1015200.}

\address [P. M. Gauthier] {D\'epartement de math\'ematiques et de statistique, Universit\'e de Montr\'eal, Montr\'eal, Qu\'ebec, 	H3C3J7, Canada}
\email{paul.m.gauthier@umontreal.ca, }

\address [Jujie Wu] {School of Mathematics (Zhuhai), Sun Yat-Sen University, Zhuhai, Guangdong,  519082, P. R.}
\email{ wujj86@mail.sysu.edu.cn}

\title{Holomorphic motion, rational approximation and  an equivalent   formulation of the Riemann Hypothesis}
\author{P. M. Gauthier and Jujie Wu$^{*}$}

\date{}
\begin{abstract} 
A compact subset $K$ of the complex plane $\C$ is a set of polynomial (respectively rational) approximation if $P(K)=A(K)$ (respectively  $R(K)=A(K)$), where $P(K)$ (respectively $R(K)$) is the family of functions on $K$ which are uniform limits of polynomials (respectively  rational functions, having no poles on $K$) and $A(K)$ is the family of continuous functions on $K,$ which are holomorphic on the interior of $K.$ In the class of compact sets, the property of being a set of polynomial approximation is easily seen to be invariant under holomorphic motion.  We show that this is no longer the case for  rational approximation. Secondly, we show that the Riemann Hypothesis holds if and only if a certain map is a holomorphic motion. 

    \bigskip
  \noindent{{\sc Mathematics Subject Classification} (2020): 30E10, 37F44  }
  
  \smallskip
  
  \noindent{{\sc Keywords}: Rational approximation, quasiconformal mapping, holomorphic motion, Riemann Hypothesis} 
 \end{abstract}

\maketitle

\tableofcontents

\section{Introduction}

In the preface of  the book \cite{AIM} by Kari Astala, Tadeusz Iwaniec and Gaven Martin, we find the following statement. 
"It is a simply amazing fact that the mathematics that underpins the geometry, structure and dimension of such concepts as Julia sets and limit sets of Kleinian groups, the spaces of moduli of Riemann surfaces, conformal dynamical systems and so forth is the very same as that which underpins existence, regularity, singular set structure and so forth for precisely the most important class of differential equations one meets in physical applications, namely, second-order divergence-type equations.
All these subjects are inextricably linked in two dimensions by the theory of quasiconformal mappings."  In the present paper, we shall display an important connection between holomorphic motion,  quasiconformal mappings, complex dynamics (in particular the dimension of Julia sets) and rational approximation.

\begin{definition}
Let $X$ be a complex manifold. A {\it holomorphic family} of rational maps $f_\lambda(z)$ over $X$ is a holomorphic map $X\times\overline\C\to\overline\C,$ given by $(\lambda,z)\mapsto f_\lambda(z).$
\end{definition}
We shall be concerned with two special cases, firstly, when $X$ is the complement $\overline\C\setminus M$ of the Mandelbrot set $M$ in order to prove our main result regarding the non-invariance of rational approximation under holomorphic motion and secondly, when $X$ is the strip  $\{\lambda: 1/2<\Re\lambda<3/2\},$ in order to give a  statement in terms of holomorphic motion which is equivalent to the Riemann Hypothesis.

The concept of holomorphic motion was first introduced by  Ma$\tilde{\text{n}}\acute{\text{e}}$-Sad-Sullivan \cite{MSS} and was generalized as follows
 (see for example \cite[p. 126]{M}).
\begin{definition}
Let $(W, \lambda_0)$ be a pointed complex manifold,  that is, a manifold with a distinguished point,   and 
denote by $\overline\C=\C\cup\{\infty\}$ the extended complex plane (the Riemann sphere). 	Let $E$ be a subset of $\overline\C$.  A holomorphic motion of $E$ in $\overline\C,$ parametrized by $W$ with base point $\lambda_0$,
is a map
$$
f(\lambda, z):W\times E \rightarrow  \overline\C,
$$
which enjoys the following properties: 
\begin{enumerate}
		\item  $f(\lambda_0,z)=z$ for all $z\in E$;
		\item For every fixed $\lambda\in W$, 
		$z \rightarrow f(\lambda,z)= f_\lambda (z)$ is an injection  on $E;$
		\item For
		every fixed $z\in E$, $\lambda \rightarrow f(\lambda, z)$ is a holomorphic map  on $\D.$
	\end{enumerate}
If $E$ is a subset of $\C,$ a holomorphic motion of $E$ in $\C$ is a map 
$$
f(\lambda, z):W\times E \rightarrow  \C
$$ 
enjoying the same three properties.  
\end{definition}

Usually, one takes $W$ to be the unit disc $\D.$  Note especially that,  in the definition of  holomorphic motion, there is no assumption regarding the  set $E$ and, for each $\lambda\in W,$  the only restriction on the  function $f(\lambda,\cdot):E\to \C$ is that it be injective. If $f$ is a holomorphic motion of $E$ in $\C,$ then it can also be considered as a holomorphic motion of $E$ in $\overline\C$ and 
if $f$ is a holomorphic motion of $E$ in $\overline\C,$ such that $f\left(\D\times(E\cap\C)\right)\subset\C,$ then the restriction of $f$ to $\D\times(E\cap\C)$ is a holomorphic motion of $E\cap\C$ in $\C.$

A deep connection between quasiconformal maps and holomorphic motions is the following remarkable $\lambda$-\/ {\it Lemma\/}, proved partially by  Ma$\tilde{\text{n}}\acute{\text{e}}$-Sad-Sullivan \cite{MSS},
Sullivan-Thurston \cite{SullivanThurston}, Bers-Royden \cite{BersRoyden}, completely by S{\l}odkowski \cite {Slodkowski} in 1991, states that the
holomorphic dependence of the injections on the parameter implies better regularity, namely, a holomorphic motion extends to a holomorphic family of quasiconformal maps
of the whole (extended) complex plane.  For a nice survey of the $\lambda$-Lemma, see also  \cite{GJW2010}.

\begin{theorem} (\cite{MSS, Slodkowski}) \label{th:lambda}
	Every holomorphic motion $f(\lambda, z):\D \times
	E\rightarrow\overline\C$ of $E$  admits an extension to a holomorphic motion $F(\lambda,z):\D\times \overline\C\rightarrow
 \overline\C,$ satisfying
	\begin{enumerate}
		\item Every $F(\lambda,\cdot):\overline \C \rightarrow  \overline\C $ is a
		quasiconformal self-homeomorphism of dilatation not exceeding $\frac{1+|\lambda|}{1-|\lambda|}$ and $F$ is jointly continuous;
		\item If $f(\lambda,z): \D\times  E\to\C,$   we  may take $F(\lambda,z):\D\times\C\to\C$  and $F$ is jointly H\"older continuous in $(\lambda,z)$.
	\end{enumerate}
\end{theorem}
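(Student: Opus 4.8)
The plan is to deduce Theorem~\ref{th:lambda} from a \emph{one-point extension lemma} by transfinite iteration, and then to read off quasiconformality together with its sharp bound from the extended motion. Concretely: (a) prove that a holomorphic motion of $E$ over $\D$ extends to a holomorphic motion of $E\cup\{w_0\}$ over $\D$ for any point $w_0$; (b) iterate to reach a holomorphic motion of all of $\overline\C$; (c) show that \emph{any} holomorphic motion of $\overline\C$ over $\D$ is automatically a jointly continuous family of quasiconformal homeomorphisms with dilatation $\le\tfrac{1+|\lambda|}{1-|\lambda|}$; (d) upgrade to joint H\"older continuity when the motion is of a subset of $\C$.

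For (a) the content is to produce, given $f:\D\times E\to\overline\C$ and $w_0\notin E$, a holomorphic map $g:\D\to\overline\C$ with $g(0)=w_0$ whose graph avoids the ``moving set'' $\{(\lambda,z):z\in f_\lambda(E)\}$; then $f$ together with $(\lambda,w_0)\mapsto g(\lambda)$ is a holomorphic motion of $E\cup\{w_0\}$, the injectivity clause being precisely $g(\lambda)\notin f_\lambda(E)$. For (b) one orders by inclusion the holomorphic motions that extend $f$; the union of a chain of them is again one (axiom~(1) is pointwise in $z$, axiom~(3) fibrewise in $\lambda$, and injectivity persists because any two points of the union lie in a common member of the chain), so Zorn's lemma — equivalently a transfinite iteration of (a) — yields a maximal extension, whose domain must be $\overline\C$. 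When $E\subset\C$, choosing $\infty$ as the very first point adjoined, with constant orbit $g\equiv\infty$ (legitimate since $\infty\notin f_\lambda(E)$), forces $F(\D\times\C)\subset\C$, which gives the setting of part~(2).

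For (c) fix $\lambda\in\D$ and four distinct points $z_1,z_2,z_3,z_4$; by holomorphy of each $F(\cdot,z_j)$ and injectivity of $F_\lambda$, the cross-ratio $\tau(\lambda)$ of $F_\lambda(z_1),\dots,F_\lambda(z_4)$ is holomorphic in $\lambda$ with values in the thrice-punctured sphere $\overline\C\setminus\{0,1,\infty\}$. The Schwarz--Pick inequality for its (complete, curvature $-1$) hyperbolic metric keeps $\tau(\lambda)$ within hyperbolic distance $d_\D(0,\lambda)=\log\tfrac{1+|\lambda|}{1-|\lambda|}$ of $\tau(0)$. This bound on cross-ratio distortion, uniform over all quadruples, makes $F_\lambda$ a homeomorphism of $\overline\C$ (continuity from the equicontinuity it encodes, then injectivity on a compact space) and, by the quantitative relation between cross-ratio distortion and maximal dilatation, makes it quasiconformal with $K(F_\lambda)\le e^{\log\frac{1+|\lambda|}{1-|\lambda|}}=\tfrac{1+|\lambda|}{1-|\lambda|}$ — the claimed sharp bound. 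Joint continuity follows since $\lambda\mapsto F(\lambda,z)$ is holomorphic and $\{F_\lambda:|\lambda|\le r\}$ is uniformly quasiconformal, hence equicontinuous after a three-point normalization. For (d), Mori's distortion theorem makes each $F_\lambda$ locally H\"older of exponent $1/K(F_\lambda)$, uniformly on compacta, while a quantitative form of the Schwarz--Pick estimate gives H\"older dependence on $\lambda$; combining yields joint H\"older continuity on $\D\times\C$.

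The whole analytic difficulty sits in step~(a): the partial results of \cite{MSS} and \cite{BersRoyden} fell short exactly because adjoining even one point, over the \emph{full} disk and with \emph{no} loss in dilatation, is hard. I would attack (a) along one of the known routes: either encode $g$ as a holomorphic-in-$\lambda$ solution of a $\dbar$/Beltrami-type problem on $\D\times\overline\C$ whose cohomological obstruction vanishes (S{\l}odkowski's circle of ideas, \cite{Slodkowski}), or fill the complement of the graph of $f$ in $\D\times\overline\C$ by pseudoholomorphic discs for a suitable almost-complex structure (Chirka's approach) and assemble the graph of $F$ from them, using a Gromov-type compactness to control the discs. Note that the \emph{sharp} constant is not an issue in (a) itself — any extension to a motion of $\overline\C$ would suffice there, the exact bound $\tfrac{1+|\lambda|}{1-|\lambda|}$ being squeezed out afterwards by the Schwarz--Pick computation of step~(c).
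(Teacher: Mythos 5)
The paper does not prove Theorem~\ref{th:lambda}: it is the classical $\lambda$-Lemma of Ma\~n\'e--Sad--Sullivan and S{\l}odkowski, stated with citations but no argument. So there is no in-paper proof to compare against; I can only assess your proposal on its own terms.

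Your outline is the standard architecture of the literature proof: (a) S{\l}odkowski's one-point extension lemma, (b) Zorn's lemma to saturate the domain, (c) the MSS cross-ratio argument for continuity and quasiconformality, (d) Mori for H\"older. Steps (b), (c), (d) are essentially right as you describe them, including the point that the union of a chain of holomorphic motions is again one, and that a continuous injection of the sphere into itself is automatically a homeomorphism. Two caveats, though. First, in (c) you obtain the \emph{sharp} bound $K(F_\lambda)\le\frac{1+|\lambda|}{1-|\lambda|}$ by asserting ``the quantitative relation between cross-ratio distortion and maximal dilatation''; that converse relation (bounded hyperbolic displacement of cross-ratios in $\overline\C\setminus\{0,1,\infty\}$ implies quasiconformality with matching constant) is itself a substantial theorem and needs to be cited precisely. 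The more customary route to the sharp constant, once some a priori quasiconformality is in hand from the cross-ratio/Montel argument, is to observe that $\lambda\mapsto\mu(F_\lambda)$ is a holomorphic map of $\D$ into the open unit ball of $L^\infty(\overline\C)$ vanishing at $0$, and then apply the Schwarz lemma in that Banach ball. Second, and more importantly, step~(a) carries the entire analytic weight of the theorem and you do not prove it; you candidly acknowledge this and point to S{\l}odkowski's Banach-algebra/polynomial-hull method and Chirka's pseudoholomorphic-disc method. Without a proof of the one-point lemma, the proposal is an accurate road map of the known proof rather than a proof. Given that the paper itself treats the result as a citation, this is a reasonable stance, but it should be stated as such rather than as a completed argument.
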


 We recall that a holomorphic map from $\D\subset\C$ to $\overline\C$ is just a meromorphic function on $\D.$ 
Also, we note that the $\lambda$-Lemma  is trivial in case $E$ is a singleton $E=\{e\}.$ Indeed, in this case $f$ has the form $f(\lambda,e)=e$ and we can put $F(\lambda,z)=z.$

The extensions in the $\lambda$-Lemma, show that
in extending a holomorphic motion $f$ of a set $E\subset\C$ to a holomorphic motion of $\overline\C,$ we may  firstly extend to holomorphic motion $F$ of $\C$ and then further extended to a holomorphic motion $\overline F$ of $\overline\C.$ In such a way that the restriction of $\overline F$ to $\D\times E$ is the original holomorphic motion $f.$ 
We thus recuperate all holomorphic motions in $\C.$ Here, we  are interested in holomorphic motions in $\C.$   Thus, $f:\D\times E\to\C.$

Although holomorphic motion $f:\D\times  E\to\C$ extends as a function of the second variable,  it is easy to give an example of a holomorphic motion of the disc $\D,$ which does not extend continuously as a function of the first variable.  For example,   the holomorphic motion $f(\lambda,z)=\sin\big(\lambda/(\lambda-1)\big)+z$ of the unit disc $\D,$ considered as a function of the first variable,   does not extend continuously to the point $\lambda=1$ on the boundary of $\D.$ 

  In sections 2, 3, and 4,  we present some  background material. Readers familiar with these matters may skip to  
Section 5,  where we  prove the following (main) result, showing that rational approximation is  non-invariant under holomorphic motion. 

\begin{theorem}
There is a compact set $K\subset\C,$  a holomorphic motion of $K,$  $H:\C\times K\to\C$ and a $\lambda\in\D,$  such that 
$$
	R(K)=A(K) \quad \mbox{but} \quad R(H_\lambda(K))\not=A(H_\lambda(K)). 
$$

\end{theorem}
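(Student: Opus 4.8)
\emph{Overview and reduction.} The plan is to reduce the statement to the construction of a single quasiconformal homeomorphism of $\overline\C$ and a compact set on which it has a prescribed effect, and then to feed that homeomorphism into the measurable Riemann mapping theorem to obtain the holomorphic motion. It suffices to produce a compact $K$ with $R(K)=A(K)$ and a quasiconformal $\phi\colon\overline\C\to\overline\C$ with $R(\phi(K))\neq A(\phi(K))$. Indeed, post-composing $\phi$ with an affine map changes neither $R$ nor $A$, so we may take $\phi$ normalized; with $k=\norm{\mu_{\phi}}_{\infty}<1$ and $\nu=\mu_{\phi}/k$, for each $\lambda\in\D$ the Beltrami equation with coefficient $\lambda\nu$ has a unique normalized quasiconformal solution $\phi^{\lambda}$, where $\phi^{0}=\mathrm{id}$, every $\phi^{\lambda}$ is injective, and $\lambda\mapsto\phi^{\lambda}(z)$ is holomorphic on $\D$ for each $z$ (measurable Riemann mapping theorem and Ahlfors--Bers). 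Setting $H(\lambda,\cdot)=\phi^{\lambda}$ for $\lambda\in\D$ and $H(\lambda,\cdot)=\mathrm{id}$ for $\lambda\in\C\setminus\D$ gives a holomorphic motion $H\colon\C\times K\to\C$ of $K$ with $H_{k}(K)=\phi(K)$, $k\in\D$.

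\emph{What $K$ and $\phi$ must look like.} By the $\lambda$-Lemma (Theorem~\ref{th:lambda}), $H_{k}$ is the restriction of a quasiconformal self-homeomorphism of $\overline\C$, and such maps carry planar null sets to planar null sets and preserve the property of having empty interior; since a compact null set is a set of rational approximation by the Hartogs--Rosenthal theorem, $K$ must have positive area, and $\phi(K)$ again has positive area and empty interior. I would take $K$ to be a ``Swiss cheese'' of positive area --- the closed unit disc with a dense, well-separated family of disjoint open discs $D_{j}$ removed with $\operatorname{int}(K)=\emptyset$ --- arranged so that $\sum_{j}\operatorname{diam}D_{j}=\infty$; then the continuous analytic capacity of $\Delta\setminus K$ is comparable to that of $\Delta$ at every scale and $R(K)=A(K)$ by Vitushkin's criterion. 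The same criterion shows $R(\phi(K))\neq A(\phi(K))$ once $\sum_{j}\alpha(\phi(D_{j}))<\infty$: then, by the semiadditivity of continuous analytic capacity, $\alpha(\Delta\setminus\phi(K))$ is bounded, whereas for a disc $\Delta$ lying well inside $\phi(\overline\D)$ and of radius exceeding that bound one has $\alpha(\Delta\setminus\operatorname{int}(\phi(K)))=\alpha(\Delta)$, which equals the radius of $\Delta$. So everything comes down to finding a quasiconformal $\phi$ which carries a cheese with $\sum_{j}\operatorname{diam}D_{j}=\infty$ to one with $\sum_{j}\alpha(\phi(D_{j}))<\infty$; since $\alpha(\phi(D_{j}))\asymp\operatorname{diam}\phi(D_{j})$ (the $\phi(D_{j})$ being quasidiscs of bounded eccentricity), $\phi$ has to contract the discs $D_{j}$ at a rate no affine or bi-Lipschitz map permits, namely the extremal rate of quasiconformal distortion of dimension (Astala's theorem).

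\emph{Where the Mandelbrot set enters, and the main obstacle.} The archetype of extremal quasiconformal distortion of dimension is Shishikura's theorem that $\partial M$ contains parameters whose Julia sets have Hausdorff dimension $2$. Accordingly I would fix $\lambda_{1}\notin M$ close to such a boundary point, so that $\dim_{H}J_{\lambda_{1}}$ is close to $2$, and take $K$ to be a Swiss cheese adapted to the Cantor set $J_{\lambda_{1}}$ --- of positive area, with $\operatorname{int}(K)=\emptyset$ and, since $\dim_{H}J_{\lambda_{1}}>1$, with $\sum_{j}\operatorname{diam}D_{j}=\infty$, so that $R(K)=A(K)$. The Julia sets $J_{\lambda}$, $\lambda\notin M$, move holomorphically over $\overline\C\setminus M$; extending this motion by the $\lambda$-Lemma to a holomorphic family of quasiconformal maps of $\overline\C$ and transporting $K$, the map $\phi$ that sends the $\lambda_{1}$-picture to the $\lambda_{0}$-picture, for $\lambda_{0}\notin M$ of large modulus (where $J_{\lambda_{0}}$ is a thin Cantor set), has dilatation tending to infinity as $\lambda_{1}\to\partial M$, and the contraction it imposes near $J_{\lambda_{1}}$ should render $\sum_{j}\alpha(\phi(D_{j}))$ finite. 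Making this quantitative --- bounding $\sum_{j}\alpha(\phi(D_{j}))$ in terms of the quasiconformal distortion dictated by the Julia-set geometry, while keeping $R(K)=A(K)$ on the source side --- is the technical heart, and the step I expect to be the main obstacle. The rest is routine: the conformal identification $\overline\C\setminus M\cong\D\subset\C$ (Douady--Hubbard) presents the construction as a holomorphic motion over $\C$ with base parameter corresponding to $\lambda_{1}$; $H_{\lambda}(K)=\phi(K)$, for the disc parameter $\lambda$ corresponding to $\lambda_{0}$, retains empty interior because $\phi$ is a homeomorphism; and Vitushkin's criterion then delivers $R(K)=A(K)$ together with $R(H_{\lambda}(K))\neq A(H_{\lambda}(K))$.
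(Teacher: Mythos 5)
Your high--level reduction is exactly the paper's: produce a single quasiconformal $\phi\colon\C\to\C$ with $R(K)=A(K)$ and $R(\phi(K))\neq A(\phi(K))$, then embed $\phi$ in a holomorphic motion via the $\lambda\nu$ Beltrami family (this is precisely the content of the paper's Theorem~\ref{embedding}, which it proves the same way, and Theorem~\ref{th:mainnotinvariant}, whose restriction to $\D\times K$ gives the statement). The divergence, and the gap, is in how you try to build $K$ and $\phi$.

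You attempt to control a ``size'' condition on \emph{both sides} of $\phi$ simultaneously: $\sum_j\operatorname{diam}D_j=\infty$ in the source (to force $R(K)=A(K)$) and $\sum_j\alpha(\phi(D_j))<\infty$ in the target (to force $R(\phi(K))\neq A(\phi(K))$). You correctly flag the quantitative reconciliation of these two conditions as the obstacle, and indeed it is a genuine gap: you give no estimate, and the route through Julia-set geometry and Tolsa's semiadditivity theorem is both heavy and unfinished. There is also a smaller unjustified step: $\sum_j\operatorname{diam}D_j=\infty$ does not by itself yield $R(K)=A(K)$ for a Swiss cheese; Vitushkin's criterion requires scale-by-scale capacity comparisons that the divergence of a single sum does not deliver.

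The paper sidesteps your obstacle by not imposing any sum condition on the source side at all. It takes $L$ and $h$ from Gehring--V\"ais\"al\"a with $\dim L<1$ and $\dim h(L)>1$, constructs the Swiss cheese $\widetilde K=\overline\D\setminus\bigcup D_j$ \emph{in the target}, with $\widetilde L=h(L)$ as its inner boundary and the radii chosen so that $\sum 2\pi r_j<\pi\alpha(\widetilde L)$ (possible because $\dim\widetilde L>1$ gives $\alpha(\widetilde L)>0$ by Lemma~\ref{alpha}), and then defines $K=h^{-1}(\widetilde K)$. On the source side $R(K)=A(K)$ follows purely from $\dim\partial_i K=\dim L<1$ via the Davie--{\O}ksendal theorem (Theorem~\ref{inner boundary}), with no control needed on the sizes of the $h^{-1}(D_j)$. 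On the target side, $R(\widetilde K)\neq A(\widetilde K)$ follows from an explicit orthogonal measure $\mu=dz_{\partial\D}-\sum_j dz_{\partial D_j}$ and a function $f\in X(\widetilde L)$ with $|f'(\infty)|>\alpha(\widetilde L)/2$; no semiadditivity is invoked. So the paper's construction of the cheese in the image and the use of Davie--{\O}ksendal on the preimage is exactly the idea you were missing, and it eliminates the ``technical heart'' you could not carry out. Your appeal to Julia sets and Shishikura is a legitimate alternative source of dimension-distorting quasiconformal maps (it is even alluded to in the paper's Section~3), but it is not needed and does not by itself close the gap.
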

Our second result, reflecting the surprising nature of holomorphic motion,  is a  weird criterion for a meromorphic function to be zero-free.  In particular, this gives  that the Riemann Hypothesis holds if and only if a certain simple mapping, closely related to the Riemann Zeta function,  is a holomorphic motion.

\section{Quasiconformal mappings} \label{sectionquasi}

In the present section, we are concerned with the connection between quasiconformal mappings and holomorphic motion and also the non-invariance of dimension under quasiconformal mappings.

 We first review the basic facts about quasiconformal mappings, following the classical book of Ahlfors \cite{AhlforsQuasi}  (see also \cite{LV}).  
Suppose $f$ is ACL in a domain $D\subset\C.$  That is, $f(x,y)$ is absolutely continuous on a.e. horizontal or vertical line in $D.$ Then, $f_z$ and $f_{\overline z}$ are defined a.e. in $D.$  
 
It is assumed that $f$ has non-zero Jacobian and is orientation-preserving. This ensures
that $f_z\not=0$ and, moreover, that $|f_z|>|f_{\overline z}|.$ The {\it dilatation} of $f$ is $D_f=(|f_z| + |f_{\overline{z}}|)/( |f_z|  - |f_{\overline{z}}|).$
Note that it is often more convenient to consider the  {\it complex dilatation\/}  $f_{\overline{z}}/f_z$  of $f$   with $d_f = |f_{\overline{z}}/f_z|$. 
The quantities $d_f$ and $D_f$ are related as follows.  
$$
D_f   = \frac{ 1+d_f}{1-d_f} ,   \ \ \ \ d_f =  \frac{ D_f -1}{D_f +1}.
$$
Note that $D_f\ge 1$ and $d_f\le 1.$ 

A homeomorphism $f$ of domain $D\subset {\mathbb C}$ is called $K$-quasiconformal  if
\begin{enumerate}
	\item $f$ is ACL in $D$;
	
	\item $D_f(z)\le K<+\infty$.

\end{enumerate}

The condition $D_f \leq K$ is equivalent to $d_f\leq k =(K-1)/(K+1)$.  The mapping is conformal at $z$ if and only if $D_f=1, d_f =0.$

Quasiconformal  mappings enjoy the following properties:
\begin{enumerate}
\item (Existence Theorem). For every measurable,  compactly supported function $\mu$ in ${\mathbb C}$ with $\|\mu\|_\infty<1$, there exists a quasiconformal mapping of ${\mathbb C}$ satisfying the following Beltrami equation
    $$
    f_{\bar{z}}=\mu f_z,\ \ \ f(z)/z\rightarrow 1\ (z\rightarrow \infty).
    $$
    
    \item   If $f:D\rightarrow \Omega$ is a $K$-quasiconformal  mapping between plane domains,  then $f^{-1} : \Omega \rightarrow D$  is also a $K$-quasiconformal  mapping.

\item The composition of a $K_1$-quasiconformal  and a $K_2$-quasiconformal  mapping is $K_1K_2$-quasiconformal 
\item If $f$ is a $K$-quasiconformal  mapping of the plane fixing $\infty$, then
$$
C_K^{-1}|z-z'|^K \le |f(z)-f(z')|\le C_K|z-z'|^{1/K}.
$$
\end{enumerate}

It follows from (3)  that a $K$-quasiconformal  mapping  composed with a conformal mapping is again a $K$-quasiconformal mapping.   This allows us to define a homeomorphism $f:X\to Y$  between two Riemann surfaces to be a $K$-quasiconformal  mapping, if it is $K$-quasiconformal  in local coordinates.  In particular, if $U$ and $V$ are subsets of $\overline \C,$ the notion of a $K$-quasiconformal   mapping $f:U\to V$ is clearly defined (see \cite{AhlforsQuasi}).   We  say that a quasiconformal  mapping $f:\overline\C\to\overline\C$ is normalized if it fixes $\infty,$ $0$ and $1.$ Every quasiconformal  mapping $\overline\C\to\overline\C$ can be normalized by composing it with an appropriate fractional linear transformation.

\begin{lemma}\label{infty}
Every homeomorphism (and in particular every quasiconformal  mapping) $ f:\C\to\C$ extends uniquely to a homeomorphism $\overline f:\overline\C\to\overline\C.$ 
\end{lemma}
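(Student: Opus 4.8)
The plan is to exploit the fact that a homeomorphism of $\C$ onto $\C$ is automatically a \emph{proper} map, which is precisely what allows an extension to the one-point compactification $\overline\C=\C\cup\{\infty\}$ by setting $\overline f(\infty)=\infty$. First I would record the key observation: if $C\subset\C$ is compact, then $f^{-1}(C)$ is the image of $C$ under the continuous inverse map $f^{-1}$, hence compact; equivalently, $|f(z)|\to\infty$ as $|z|\to\infty$. It is exactly here that the hypothesis that $f$ is a homeomorphism — and not merely a continuous injection — enters.

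Next I would define $\overline f\colon\overline\C\to\overline\C$ by $\overline f|_\C=f$ and $\overline f(\infty)=\infty$, and verify that it is a homeomorphism. Continuity at a point of $\C$ is automatic, $\C$ being open in $\overline\C$. For continuity at $\infty$, recall that the sets $U_C:=\{\infty\}\cup(\C\setminus C)$ with $C\subset\C$ compact form a neighbourhood basis at $\infty$; given $U_C$, properness of $f$ makes $V:=\{\infty\}\cup\big(\C\setminus f^{-1}(C)\big)$ an open neighbourhood of $\infty$, and clearly $\overline f(V)\subset U_C$. Thus $\overline f$ is continuous, and it is a bijection because $f$ is. Since $\overline\C$ is compact and Hausdorff, a continuous bijection $\overline\C\to\overline\C$ is automatically a homeomorphism; alternatively, applying the same construction to the homeomorphism $f^{-1}$ produces a continuous inverse for $\overline f$.

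Finally, uniqueness follows since $\C$ is dense in $\overline\C$ and $\overline\C$ is Hausdorff: two continuous maps $\overline\C\to\overline\C$ agreeing on $\C$ agree everywhere, and in any case a continuous extension of $f$ is forced to send $\infty$ to $\lim_{|z|\to\infty}f(z)=\infty$. The quasiconformal assertion is then an immediate special case, a quasiconformal mapping being in particular a homeomorphism. I do not expect a genuine obstacle in this lemma; the only step requiring a moment's thought is the properness of $f$, and everything else is standard point-set topology of the sphere.
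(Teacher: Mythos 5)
Your argument is correct, and it is the standard point-set argument one would expect; the paper in fact leaves the proof of this lemma to the reader, so there is no written proof in the paper to compare against. Your identification of properness as the one non-vacuous point is exactly right, and the remaining steps (one-point compactification, continuous bijection of a compact Hausdorff space is a homeomorphism, uniqueness by density) are airtight.
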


We leave the proof to the reader.

The following, which is contained in \cite[Theorem 12.5.3]{AIM} asserts that every quasiconformal  mapping  $\C\to\C$ can be embedded in a holomorphic motion. 

\begin{theorem}\label{embedding}
Let $ h:\C\to\C$ be a quasiconformal  mapping. Then, there exists a holomorphic motion $f:\D\times\C\to\C,$ and a $\lambda\in\D,$ such that $f_\lambda=h.$
\end{theorem}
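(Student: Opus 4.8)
The plan is to manufacture the holomorphic motion out of the measurable Riemann mapping theorem in its parametrized (Ahlfors--Bers) form, and then to correct the output by a holomorphic one‑parameter family of affine maps so that the value at a single interior parameter is exactly $h$. Write $\mu=h_{\bar z}/h_z$ for the complex dilatation of $h$ and set $k=\|\mu\|_\infty$, so that $k<1$ by hypothesis. If $k=0$ then $h$ is conformal, hence affine, $h(z)=az+b$ with $a\neq 0$, and one takes $f(\lambda,z)=\exp(2\lambda\log a)\,z+2b\lambda$ on $\D\times\C$ (any fixed branch of $\log a$): this is a holomorphic motion with base point $0$ and $f(\tfrac12,z)=h(z)$. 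So assume $k>0$.

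For $\lambda\in\D$ put $\mu_\lambda=(\lambda/k)\mu$, a measurable Beltrami coefficient with $\|\mu_\lambda\|_\infty=|\lambda|<1$. By the measurable Riemann mapping theorem there is, for each $\lambda\in\D$, a unique quasiconformal $f^\lambda:\C\to\C$ with complex dilatation $\mu_\lambda$ and fixing $0,1,\infty$; by the Ahlfors--Bers theorem on holomorphic dependence of solutions of the Beltrami equation on parameters, $\lambda\mapsto f^\lambda(z)$ is holomorphic on $\D$ for every fixed $z\in\C$. Since $\mu_0\equiv 0$, $f^0$ is a conformal automorphism of $\C$ fixing $0$ and $1$, hence $f^0=\mathrm{id}$. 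At the parameter $\lambda=k\in(0,1)\subset\D$ we have $\mu_k=\mu$, the complex dilatation of $h$, so by the uniqueness clause of the measurable Riemann mapping theorem $h\circ(f^k)^{-1}$ is a conformal automorphism of $\C$, hence affine; write it $\phi(w)=aw+b$ with $a\neq 0$, so that $h=\phi\circ f^k$.

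It remains to absorb $\phi$ without disturbing the base point. Choose holomorphic $a(\cdot),b(\cdot)$ on $\D$ with $a(0)=1$, $b(0)=0$, $a(k)=a$, $b(k)=b$, and $a(\lambda)\neq 0$ for all $\lambda\in\D$; for instance $b(\lambda)=(b/k)\lambda$ and $a(\lambda)=\exp\big((\lambda/k)\log a\big)$ for a fixed branch of $\log a$. Put $\psi_\lambda(w)=a(\lambda)w+b(\lambda)$, an affine automorphism of $\C$ with $\psi_0=\mathrm{id}$ and $\psi_k=\phi$, and define $f(\lambda,z)=\psi_\lambda\big(f^\lambda(z)\big)=a(\lambda)f^\lambda(z)+b(\lambda)$ on $\D\times\C$. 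Then $f(0,z)=f^0(z)=z$; for fixed $\lambda$ the map $z\mapsto f(\lambda,z)$ is the injection $f^\lambda$ followed by the automorphism $\psi_\lambda$, hence injective; and for fixed $z$ the map $\lambda\mapsto a(\lambda)f^\lambda(z)+b(\lambda)$ is holomorphic on $\D$, since $\lambda\mapsto f^\lambda(z)$ is. Hence $f$ is a holomorphic motion $\D\times\C\to\C$ with base point $0$, and $f(k,z)=\psi_k\big(f^k(z)\big)=\phi\big(f^k(z)\big)=h(z)$, so $\lambda=k$ does the job.

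The only genuinely non‑formal point is the passage to $h=\phi\circ f^k$ with $\phi$ affine: it uses the uniqueness half of the measurable Riemann mapping theorem together with the identification of $\mathrm{Aut}(\C)$ with the affine group, and it is precisely what forces the affine‑correction step, since $h$ itself carries no normalization. Everything else is routine checking on top of the cited Ahlfors--Bers theorem; note in particular that the hypothesis $k=\|\mu\|_\infty<1$ is exactly what lets us realize $h$ at the interior parameter $\lambda=k\in\D$ rather than at the forbidden boundary value $1$.
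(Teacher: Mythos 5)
Your argument is correct, and it is essentially the standard Ahlfors--Bers construction that the paper invokes by citing \cite[Theorem 12.5.3]{AIM} (the paper gives no proof of its own for this theorem). You interpolate the Beltrami coefficient linearly, $\mu_\lambda = (\lambda/k)\mu$, use the measurable Riemann mapping theorem with the $\{0,1,\infty\}$ normalization, invoke holomorphic dependence on parameters, and read off the injectivity and base-point conditions. The one feature you add beyond the bare citation is the affine-correction step $\psi_\lambda$, which is needed precisely because you used the triply-normalized solution $f^\lambda$ while $h$ carries no normalization; in \cite{AIM} the statement is typically phrased for normalized quasiconformal maps, which makes that correction invisible. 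All the verifications (that $f^0 = \mathrm{id}$ via $\mathrm{Aut}(\C)$, that $h\circ(f^k)^{-1}$ is conformal hence affine, that $a(\lambda)\ne 0$ on all of $\D$) check out; the degenerate case $k=0$ is also handled cleanly. One pedantic remark: the fact that two quasiconformal maps with the same Beltrami coefficient differ by a conformal post-composition is best attributed to the Stoilow factorization (the composition is $1$-quasiconformal, hence conformal by Weyl's lemma) rather than literally to the ``uniqueness clause'' of the measurable Riemann mapping theorem, which is stated for normalized solutions; but the substance of your step is exactly this standard consequence, so the proof stands.
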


The following is a refinement of Lemma \ref{infty}.

\begin{corollary}
Every quasiconformal  mapping $ h:\C\to\C$ extends uniquely to a quasiconformal  mapping  $\overline h:\overline\C\to\overline\C.$ 
\end{corollary}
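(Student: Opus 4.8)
The plan is to combine Lemma~\ref{infty}, Theorem~\ref{embedding}, and the $\lambda$-Lemma (Theorem~\ref{th:lambda}). By Lemma~\ref{infty}, the quasiconformal map $h$ already has a \emph{unique} extension to a homeomorphism $\overline h:\overline\C\to\overline\C$, so the uniqueness assertion is immediate and the only thing to prove is that this $\overline h$ is quasiconformal.

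First I would invoke Theorem~\ref{embedding} to obtain a holomorphic motion $f:\D\times\C\to\C$ and a parameter $\lambda_0\in\D$ with $f_{\lambda_0}=h$. Viewing $f$ as a holomorphic motion of $E=\C$ inside $\overline\C$ (legitimate by the remarks following the definition of holomorphic motion), the $\lambda$-Lemma yields an extension $F:\D\times\overline\C\to\overline\C$ in which every slice $F_\lambda:\overline\C\to\overline\C$ is a quasiconformal self-homeomorphism of dilatation at most $\frac{1+|\lambda|}{1-|\lambda|}$. In particular $F_{\lambda_0}$ is a quasiconformal self-homeomorphism of $\overline\C$, and since $F$ extends $f$ we have $F_{\lambda_0}|_{\C}=f_{\lambda_0}=h$.

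The next step is to identify $F_{\lambda_0}$ with $\overline h$. Because $h$ maps $\C$ bijectively onto $\C$, the bijection $F_{\lambda_0}$ of $\overline\C$ must send the remaining point $\infty$ to $\infty$; thus $F_{\lambda_0}$ is a homeomorphism of $\overline\C$ fixing $\infty$ whose restriction to $\C$ equals $h$. By the uniqueness clause of Lemma~\ref{infty}, $F_{\lambda_0}=\overline h$. Hence $\overline h$ is quasiconformal, indeed with dilatation bounded by $\frac{1+|\lambda_0|}{1-|\lambda_0|}$, which proves the corollary.

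The only delicate point is bookkeeping about the word ``extension'': one must be sure that the $\lambda$-Lemma extension $F_{\lambda_0}$, a priori merely \emph{some} quasiconformal homeomorphism of $\overline\C$ agreeing with $h$ on $\C$, is literally the topological extension $\overline h$ supplied by Lemma~\ref{infty} --- and this is exactly where the uniqueness in Lemma~\ref{infty} together with the observation $F_{\lambda_0}(\infty)=\infty$ are used. (A self-contained alternative avoiding the $\lambda$-Lemma: $\overline h$ is quasiconformal on $\C$ by hypothesis, and in the conformal chart $w\mapsto 1/w$ at $\infty$ it becomes $w\mapsto 1/h(1/w)$, a composition of $h$ with conformal maps, hence $K$-quasiconformal on a punctured neighborhood of $0$; since an isolated point is removable for $K$-quasiconformal homeomorphisms, $\overline h$ is quasiconformal across $\infty$ as well. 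The first argument is shorter given the machinery already assembled in this section.)
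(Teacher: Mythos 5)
Your argument is exactly the paper's: embed $h$ in a holomorphic motion via Theorem~\ref{embedding}, then apply the $\lambda$-Lemma (Theorem~\ref{th:lambda}) to get a quasiconformal homeomorphism of $\overline\C$ extending $h$. The paper states this in one line; you have correctly filled in the only nontrivial detail, namely that $F_{\lambda_0}$ must fix $\infty$ and hence, by the uniqueness clause of Lemma~\ref{infty}, coincides with $\overline h$.
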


\begin{proof}
The proof follows by first embedding $ h $ in a holomorphic motion and then using the $\lambda$-Lemma.
\end{proof}

 For a subset $X$ of a metric space $M,$  and $d>0,$ denote by $\mathcal H^d(X)$ the $d$-dimensional Hausdorff measure of $X.$ Then, $\mathcal H^d(X)$ is a non-increasing and non-negative function of $d.$  There is a unique number, denoted by $dim(X)$ and called the Hausdorff dimension of $X,$ such that 
$$
	\sup\{d\in[0,+\infty): \mathcal H^d(X)=+\infty\} = \dim(X) = \inf\{d\in[0,+\infty):\mathcal H^d(X)=0\}.
$$

Since the beginning of the  1970s, examples have been known of sets whose Hausdorff dimension is not preserved under quasiconformal mappings.  The following beautiful result of Gehring and  V$\rm{\ddot{a}  is \ddot{a}  l  \ddot{a} }$  is a precise quantitative statement to this effect.  

\begin{theorem} \cite[Cor. 6]{GV}   
For each integer $n\ge 2$ and each pair of numbers $\alpha, \beta\in (0,n),$ there exists a quasiconformal mapping $ h: \R^n\to\R^n$ and a compact set $L\subset\R^n$ such that   
$$
\dim  L = \alpha,   \ \ \  \dim h(L) = \beta.
$$
\end{theorem}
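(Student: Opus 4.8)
The plan is to deduce the theorem of Gehring–Väisälä from a combination of the known one-dimensional (in the sense of a single radial direction) constructions together with a product/stacking argument, since the statement quoted is attributed to \cite{GV} and we are only asked to sketch a proof. First I would recall the classical building block: there is a radial quasiconformal mapping of $\R^n$ of the form $h(x) = |x|^{s-1}x$ (suitably smoothed near the origin so that it is genuinely quasiconformal on all of $\R^n$, not merely on $\R^n\setminus\{0\}$), which sends the unit sphere to itself and rescales radii by the power $s$. Such a map distorts Hausdorff dimension of self-similar Cantor-type sets lying along a ray, or more robustly of Cantor sets built in a single coordinate, in a controlled way: a Cantor set $C\subset\R$ of dimension $d$ placed as $C\times\{0\}^{n-1}$ gets its dimension multiplied essentially by a factor governed by $s$. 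By choosing $s$ appropriately one can move the dimension of such a set to any prescribed target in $(0,n)$, at least in a range; iterating or composing finitely many such radial maps centered at different points of the Cantor set lets one cover the full range $(0,n)$ for the image.

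Next I would make precise the Cantor-set construction whose dimension is prescribed: fix $\alpha\in(0,n)$ and build a compact set $L\subset\R^n$ with $\dim L=\alpha$ as a nested intersection of unions of balls with carefully chosen ratios (an Ahlfors-regular Cantor set). The freedom in the ratio parameter at each generation is exactly what realizes the value $\alpha$. Then I would apply a radial power map $h$ centered at a point of $L$ (or, if needed, a composition of such maps, one per "branch") and compute $\dim h(L)$ using the bi-Hölder bounds for quasiconformal maps — property (4) in the excerpt gives $C_K^{-1}|z-z'|^K\le |f(z)-f(z')|\le C_K|z-z'|^{1/K}$ — together with the finer, exact scaling behaviour of the power map on the self-similar structure, to get $\dim h(L)=\beta$ exactly rather than merely an inequality. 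The point is that for the specific radial maps one has matching upper and lower mass-distribution estimates, so the crude Hölder bound is upgraded to an equality for these particular sets.

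The main obstacle, and the step I expect to require the most care, is achieving the \emph{exact} target dimension $\beta$ (not just "some dimension in an interval") while keeping a \emph{single global} quasiconformal homeomorphism of all of $\R^n$: the naive radial power map only shifts dimension within a limited window determined by the allowable dilatation, so one must either compose several such maps — which multiplies dilatations by property (3) but is harmless since no bound on $K$ is claimed in the statement — or arrange the Cantor set so that $h$ acts differently on different generations, which forces one to check that the resulting map is still ACL with locally bounded dilatation across the gluing regions. Verifying the ACL condition and the uniform dilatation bound at the seams between annuli where different power exponents are interpolated is the technical heart; once that is in place, the dimension computation is a standard application of the mass distribution principle to the Ahlfors-regular measures on $L$ and $h(L)$.
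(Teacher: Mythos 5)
The paper does not prove this theorem; it is simply cited from Gehring--V\"ais\"al\"a \cite{GV}, so there is no in-paper proof to compare against. Evaluating your sketch on its own merits, there is a genuine gap in the very first step. A radial power map $h(x)=|x|^{s-1}x$ (with $r\mapsto r^{s}$ in polar coordinates) is a smooth diffeomorphism with nondegenerate derivative on $\R^{n}\setminus\{0\}$; in particular it is locally bi-Lipschitz away from the origin. Consequently it \emph{preserves} the Hausdorff dimension of any compact Cantor set that is not dimension-concentrated at the single center point. Concretely, if $C\subset[0,1]$ is the middle-thirds Cantor set and $h(x)=x^{s}$, then $h$ is a diffeomorphism on $[1/3,1]$, so $\dim h(C)\ge\dim(C\cap[1/3,1])=\log 2/\log 3$, and the matching Hölder upper bound forces $\dim h(C)=\log 2/\log 3$, unchanged. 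The claim that ``a Cantor set $C\times\{0\}^{n-1}$ gets its dimension multiplied essentially by a factor governed by $s$'' is therefore false, and composing finitely many such centered radial maps does not help: the composite is still locally bi-Lipschitz away from finitely many points.

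The actual mechanism in \cite{GV} is structurally different. One builds the Cantor set and the homeomorphism \emph{in tandem}, generation by generation, so that at the $k$-th level of the nested construction the map carries pieces of diameter roughly $s_{k}$ to pieces of diameter roughly $t_{k}$, with $(s_{k})$ and $(t_{k})$ chosen so that the local dilatation of the piecewise-defined map stays uniformly bounded even though $\log t_{k}/\log s_{k}$ tends to the prescribed ratio $\alpha/\beta$. The distortion thus happens \emph{simultaneously at all scales and around every point} of the Cantor set, not near one center. Your last paragraph gestures toward this (``arrange the Cantor set so that $h$ acts differently on different generations'' and ``check that the resulting map is still ACL with locally bounded dilatation across the gluing regions''), which is indeed the technical heart of Gehring--V\"ais\"al\"a; but as written your construction never reaches that point because the initial building block does not distort dimension at all. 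To repair the sketch you would need to abandon the single radial power map and instead describe the piecewise construction of a map whose scaling exponent is constant across each generation of the Cantor set but varies from generation to generation, then verify the ACL property and the uniform dilatation bound across the annular seams where two exponents are glued.
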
 
In particular, we have the following.

\begin{corollary}\label{qcL}	
There is a compact set $L\subset\C$ and a quasiconformal  mapping $h:\C\to\C,$ such that 
$$
	\dim L < 1 \quad \mbox{but} \quad \dim h(L) >1. 
$$	
\end{corollary}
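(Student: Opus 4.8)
The plan is to deduce Corollary~\ref{qcL} directly from the Gehring--V\"ais\"al\"a theorem (\cite[Cor. 6]{GV}) quoted immediately above, specialized to $n=2$ and $\R^2$ identified with $\C$ in the usual way. The only subtlety is that the Gehring--V\"ais\"al\"a statement requires $\alpha,\beta\in(0,n)=(0,2)$ with $\alpha\neq\beta$ allowed but both strictly less than $n$, so we must pick admissible exponents straddling the value $1$.

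First I would invoke the theorem with $n=2$, $\alpha=1/2$ and $\beta=3/2$ (any choice with $0<\alpha<1<\beta<2$ works equally well). This produces a quasiconformal mapping $h\colon\R^2\to\R^2$ and a compact set $L\subset\R^2$ with $\dim L=1/2<1$ and $\dim h(L)=3/2>1$. Second, I would transport this to the complex setting: under the standard linear homeomorphism $\R^2\cong\C$, which is bi-Lipschitz and hence preserves Hausdorff dimension and carries quasiconformal maps to quasiconformal maps, $L$ becomes a compact subset of $\C$ and $h$ becomes a quasiconformal self-map of $\C$. Finally, reading off the dimensions gives $\dim L<1$ and $\dim h(L)>1$, which is exactly the assertion of the corollary.

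There is essentially no obstacle here: the corollary is a pure specialization of a cited result, and the ``hard part'' has already been absorbed into Theorem~\cite[Cor. 6]{GV}. The only thing one must be slightly careful about is to record that passing between $\R^2$ and $\C$ (and, if one wishes, between maps of $\C$ and maps of $\overline\C$ via the corollary preceding this one) does not disturb either quasiconformality or Hausdorff dimension, both of which are invariant under the bi-Lipschitz identification. A one-line proof along these lines suffices.

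\begin{proof}
Apply the Gehring--V\"ais\"al\"a theorem \cite[Cor. 6]{GV} with $n=2$, $\alpha=1/2$ and $\beta=3/2$. This yields a quasiconformal mapping $h\colon\R^2\to\R^2$ and a compact set $L\subset\R^2$ with $\dim L=1/2$ and $\dim h(L)=3/2$. Identifying $\R^2$ with $\C$ via the standard bi-Lipschitz homeomorphism $(x,y)\mapsto x+iy$, which preserves both quasiconformality and Hausdorff dimension, we may regard $L$ as a compact subset of $\C$ and $h$ as a quasiconformal self-map of $\C$. Then $\dim L=1/2<1$ while $\dim h(L)=3/2>1$, as required.
\end{proof}
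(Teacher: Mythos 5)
Your proof is correct and is exactly the paper's intended argument: the paper offers no written proof at all, simply noting ``In particular, we have the following'' right after the Gehring--V\"ais\"al\"a theorem, so the corollary is understood as the $n=2$ specialization you carry out. Your choice of $\alpha=1/2$, $\beta=3/2$ and the remark about the bi-Lipschitz identification $\R^2\cong\C$ are exactly the right details to record.
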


\begin{corollary}\label{holomorphicmotionL}	
There is a compact set $L\subset\C$ and a holomorphic motion $f:\D\times\C\to\C$, such that, for some $\lambda\in\D\setminus\{0\},$
$$
	\dim L < 1 \quad \mbox{but} \quad \dim f_\lambda(L) >1. 
$$	
\end{corollary}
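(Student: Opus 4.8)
The plan is to combine the previous two corollaries: Corollary~\ref{qcL} supplies a compact set $L\subset\C$ and a quasiconformal mapping $h:\C\to\C$ with $\dim L<1<\dim h(L)$, and Theorem~\ref{embedding} embeds $h$ in a holomorphic motion. More precisely, first I would invoke Corollary~\ref{qcL} to fix $L$ and $h$. Then, applying Theorem~\ref{embedding} to the quasiconformal map $h$, I obtain a holomorphic motion $f:\D\times\C\to\C$ and a parameter $\lambda\in\D$ with $f_\lambda=h$. Substituting $f_\lambda=h$ into the dimension inequalities from Corollary~\ref{qcL} immediately gives $\dim L<1$ and $\dim f_\lambda(L)=\dim h(L)>1$, which is the desired conclusion.

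The only point requiring a small extra remark is the stipulation $\lambda\in\D\setminus\{0\}$. This is automatic: since a holomorphic motion satisfies $f(\lambda_0,z)=z$ for the base point (here $\lambda_0=0$), we have $f_0=\mathrm{id}$, whereas $h=f_\lambda$ cannot be the identity because $\dim h(L)\neq\dim L$. Hence $\lambda\neq 0$, so $\lambda\in\D\setminus\{0\}$ as required.

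I do not expect any serious obstacle here — the statement is a straightforward corollary obtained by chaining Corollary~\ref{qcL} (the Gehring--V\"ais\"al\"a dimension distortion) with the embedding theorem, Theorem~\ref{embedding}. If anything, the only thing to be careful about is that Theorem~\ref{embedding} is stated for quasiconformal maps of $\C$ (not $\overline\C$), which matches the hypothesis of Corollary~\ref{qcL} exactly, so no compactification or normalization step is needed; the holomorphic motion produced already has the form $f:\D\times\C\to\C$ that the statement demands.
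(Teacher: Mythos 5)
Your proposal is correct and follows exactly the same route as the paper: apply Corollary~\ref{qcL} to get $L$ and $h$, then Theorem~\ref{embedding} to embed $h$ as $f_\lambda$ in a holomorphic motion of $\C$. The observation that $\lambda\neq 0$ is automatic (since $f_0=\mathrm{id}$ preserves dimension) is a small, valid addition the paper leaves implicit.
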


\begin{proof}
This follows immediately from Corollary \ref{qcL} and Theorem \ref{embedding}.
\end{proof}


\section{Dynamics of quadratic polynomials}

In complex dynamics, the dimension of Julia sets  conveys important geometric properties of the dynamics (see \cite{AIM}). Julia sets of quadratic polynomials provide examples of compact sets as in Corollary  {\ref{holomorphicmotionL}}. We say that polynomiall maps $f$ and $g$ from  $\C$ to $\C$
 are {\it conjugate}  if there exists  an affine map $h(\eta)=a+b\eta,$ such that $g=h^{-1}\circ f\circ h.$
 It is  known (see for example \cite[p. 297]{AIM})  that every quadratic polynomial, 
$$
	p(z)=\alpha z^2+\beta z+\gamma,	\quad	\alpha \in\C\setminus \{0\},	\quad	\beta, \gamma \in \C,
$$
is conjugate to a polynomial of the form
\begin{equation}\label{p_c}
	p_c( \zeta)=\zeta^2+c,	\quad	\mbox{where}	\quad	 c\in\C. 
\end{equation}
Consequently, the dynamics of all quadratic polynomials is fully represented by that of the  polynomials $ z^2+c.$

On the other hand, it is easy to see  that, for different values of $c$ the polynomials $p_c$ are never conjugate to each other by  an affine map. Note that the family (\ref{p_c}) of functions from $\overline\C$ to $\overline\C$ is a holomorphic family. 

For each $c$ and each $n=0,1,\ldots,$ denote by $p_c^n$ the $n$th iterate of $p_c.$ 
The {\it Mandelbrot set} is the set of values $c$ for which the orbit of $0$ under iteration by $p^n_c$ is bounded, that is
$$
	\mathcal M = \{c\in\C:  \sup_n|p_c^n(0)|<\infty\}  =  \{c\in\C: p_ c^n(0)    \nrightarrow\infty	\quad	\mbox{as}	\quad	n\to\infty\}.
$$
Clearly, $0$ is in the Mandelbrot set. 



The Mandelbrot set is compact and it follows from the first sentence of the proof of Theorem 1 in the work of John Hubbard and Adrien Douady \cite{DH} that $\overline \C\setminus M$ and $\overline\C\setminus\D$ are conformally equivalent.  Therefore both $M$ and its complement are connected.

A point $x\in\overline\C$ is {\it stable}  for $p_c$ if on some neighbourhood $V$ of $x$ the family 
$$
	p^n_c:V\to\overline\C, \quad  n=0,1,2,\ldots
$$
is equicontinuous.  
The set $\mathcal F_c=\mathcal F(p_c)$ of stable points is called the {\it Fatou set} of $p_c.$ 
The set $J_c=J(p_c)$ of {\it unstable} points (i.e. points that are not stable) is called the {\it Julia set} of $p_c$.  The Julia set also has the following characterization: 
$$
	J_c=\partial\{z: p_c^n(z)\to\infty\} =\partial\{z: p_c^n(z)\nrightarrow\infty\}.
$$  
The Julia sets $J_c$ are  compact, non-empty, with empty interior 
and for $c$ outside of the Mandelbrot set,  they  are Cantor sets and hence are all topologically equivalent.

We now provide some basic and well-known results on quadratic dynamics  which we shall employ.

From an estimate of Thomas Ransford \cite[Example 2]{R}, we have the following. 
\begin{theorem}\label{Ransford}
$$
	\dim J_c\to 0	\quad	\mbox{as}		\quad		c\to\infty.
$$
\end{theorem}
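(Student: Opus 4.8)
The plan is to realize $J_c$, for $|c|$ large, as the attractor of an iterated function system consisting of the two inverse branches of $p_c$, and to observe that these branches are strong contractions whose contraction ratio tends to $0$ as $c\to\infty$; the dimension bound is then the classical Moran-type estimate for conformal iterated function systems, which is essentially Ransford's computation.

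First I would localize $J_c$. Let $r_c=\tfrac12(1+\sqrt{1+4|c|})$ be the positive root of $r^2-r-|c|=0$, so that $r_c\sim\sqrt{|c|}$ as $c\to\infty$. If $|z|>r_c$ then $|p_c(z)|\ge|z|^2-|c|>|z|$, and induction shows $p_c^n(z)\to\infty$; hence the filled Julia set — and in particular $J_c$ — is contained in the closed disk $\overline D:=\{\,|z|\le r_c\,\}$. Complete invariance of $J_c$ then forces, for $z\in J_c$, that $p_c(z)\in J_c\subset\overline D$, i.e. $|z^2+c|\le r_c$, so that
$$
|z|\ \ge\ \sqrt{|c|-r_c}\qquad\text{for every }z\in J_c,
$$
and the right-hand side tends to $\infty$ with $c$ (it is the expansion $|p_c'(z)|=2|z|\to\infty$ on $J_c$ that ultimately makes the dimension collapse).

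Next, for $|c|$ large enough we have $|c|>r_c$, so the critical value $c$ lies outside $\overline D$ and $w\mapsto w-c$ is a non-vanishing holomorphic function on the simply connected set $\overline D$. It therefore has two holomorphic square roots, which are precisely the two inverse branches $g_1,g_2$ of $p_c$ on $\overline D$. The same estimate as above, applied with $\overline D$ in place of $J_c$, gives $p_c^{-1}(\overline D)\subset\overline D$, hence $g_1(\overline D)\cup g_2(\overline D)\subset\overline D$; and since $|w-c|\ge|c|-r_c$ on $\overline D$,
$$
|g_i'(w)|\ =\ \frac{1}{2\sqrt{|w-c|}}\ \le\ \frac{1}{2\sqrt{|c|-r_c}}\ =:\ s_c,\qquad w\in\overline D,
$$
with $s_c\to0$ as $c\to\infty$. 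As $\overline D$ is convex, each $g_i$ is an $s_c$-Lipschitz self-map of $\overline D$.

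Finally I would run the covering argument. Complete invariance gives, for every $n$,
$$
J_c\ \subset\ \bigcup_{(i_1,\dots,i_n)\in\{1,2\}^n}\ (g_{i_1}\circ\cdots\circ g_{i_n})(\overline D),
$$
a union of $2^n$ sets of diameter at most $s_c^n\operatorname{diam}\overline D$. Hence $\mathcal H^d(J_c)\le\limsup_{n}(2s_c^d)^n(\operatorname{diam}\overline D)^d=0$ whenever $2s_c^d<1$, so
$$
\dim J_c\ \le\ \frac{\log 2}{\log(1/s_c)}\ =\ \frac{\log 2}{\log 2+\tfrac12\log(|c|-r_c)}\ \longrightarrow\ 0\qquad(c\to\infty),
$$
which is the assertion. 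The only delicate point in the whole argument is the one already flagged: one must check that for large $|c|$ the branch point $c$ lies \emph{outside} $\overline D$, so that the two inverse branches are single-valued on all of $\overline D$ — this uses the gap between $|c|$ and $r_c\sim\sqrt{|c|}$, and everything else is routine.
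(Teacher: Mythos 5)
Your argument is correct and self-contained; the paper itself gives no proof of Theorem~\ref{Ransford}, merely citing Ransford's Example~2, and your covering argument via the two inverse branches on the disc $\{|z|\le r_c\}$ and the Moran-type estimate $\dim J_c\le \log 2/\log(1/s_c)$ is exactly the standard route (essentially Ransford's own). The only small point worth making explicit is the induction showing escape from $\{|z|>r_c\}$: from $r_c^2-r_c=|c|$ one gets $|p_c(z)|-r_c\ge 2r_c(|z|-r_c)$, and since $r_c>1$ for $|c|>0$ the gap at least doubles at each step, so the orbit indeed tends to $\infty$; with that spelled out, everything checks.
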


The next theorem is due to Mitsuhiro Shishikura.\\

\begin{theorem}\cite[Remark 1.1]{S}\label{Shishikura}
$$
	\sup_{c\in \C\setminus\mathcal M} \dim J_c	=2.
$$
\end{theorem}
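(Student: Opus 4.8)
The plan is to establish the two inequalities separately. The upper bound $\sup_{c\in\C\setminus\mathcal M}\dim J_c\le 2$ is immediate: each $J_c$ is a bounded subset of $\C$, hence of Hausdorff dimension at most $2$. The whole difficulty is the reverse inequality: for every $\varepsilon>0$ one must produce a $c\notin\mathcal M$ with $\dim J_c>2-\varepsilon$.

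For the lower bound I would bring in the \emph{hyperbolic dimension} $\dim_{\mathrm{hyp}}J_c$, the supremum of $\dim K$ over all compact, $p_c$-invariant subsets $K\subset J_c$ on which $p_c$ is uniformly expanding. Three facts organize the argument. (i) When $c\notin\mathcal M$ the critical orbit escapes, $J_c$ is a Cantor set and $p_c$ is uniformly expanding on all of $J_c$; hence $\dim_{\mathrm{hyp}}J_c=\dim J_c$ there (in fact $\dim J_c$ is then the Bowen zero of the associated pressure function, and depends real-analytically on $c$). (ii) The function $c\mapsto\dim_{\mathrm{hyp}}J_c$ is lower semicontinuous on $\C$: any expanding invariant compact subset of $J_c$ persists and moves holomorphically under a small perturbation of $c$, and the Hausdorff dimension of such a moving hyperbolic set varies real-analytically; thus $\dim_{\mathrm{hyp}}$ is a supremum of continuous functions. (iii) $\partial\mathcal M\subset\overline{\C\setminus\mathcal M}$, so every boundary parameter is a limit of parameters outside $\mathcal M$. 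Combining (i)--(iii): for $c_0\in\partial\mathcal M$, choosing $c_n\to c_0$ with $c_n\notin\mathcal M$ gives $\dim_{\mathrm{hyp}}J_{c_0}\le\liminf_n\dim_{\mathrm{hyp}}J_{c_n}=\liminf_n\dim J_{c_n}\le\sup_{c\notin\mathcal M}\dim J_c\le 2$. Hence it suffices to show that the hyperbolic dimension over the \emph{boundary} is maximal, i.e. $\sup_{c\in\partial\mathcal M}\dim_{\mathrm{hyp}}J_c=2$.

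This last statement is the heart of the matter, and is essentially what Shishikura establishes in \cite{S}; the mechanism is \emph{parabolic bifurcation}. Fix a parameter $c^\ast\in\partial\mathcal M$ for which $p_{c^\ast}$ has a parabolic periodic point — for instance the cusp $c^\ast=1/4$, where $z=1/2$ is a parabolic fixed point of multiplier $1$. Under a small perturbation of $c$ the parabolic point disintegrates, and the first-return map of $p_c$ to a sector at that point becomes, in a suitable (Fatou) coordinate, a near-translation with transit time $T(c)\to\infty$ as $c\to c^\ast$. Composing inverse branches of this return map generates, inside $J_c$, an approximately self-similar Cantor set with more and more pieces of smaller and smaller size, whose similarity dimension can be forced arbitrarily close to $2$ by choosing $c$ suitably near $c^\ast$ along appropriate directions. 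Turning this heuristic into genuinely uniformly expanding invariant subsets $K_c\subset J_c$ with $\dim K_c\to 2$ — hence $\dim_{\mathrm{hyp}}J_c\to 2$ — requires the theory of parabolic implosion (following Douady, Lavaurs and Shishikura): control of the Fatou coordinates, of the Lavaurs/transit maps, and of the distortion of long compositions of inverse branches. This is the same implosion phenomenon that yields $\dim\partial\mathcal M=2$ and $\dim J_c=2$ for a generic $c\in\partial\mathcal M$, and Remark 1.1 of \cite{S} is the observation that the construction can be run so as to remain on, or be approximated from, the hyperbolic side of $\partial\mathcal M$.

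The main obstacle is precisely that final control: ensuring the dimensions of the invariant sets $K_c$ converge to the extreme value $2$ rather than to some value bounded away from $2$. A naive linearization of the return map will not suffice — the maps are genuinely nonlinear and the transit times blow up — so one needs uniform distortion estimates for arbitrarily long compositions of inverse branches, which is exactly what the parabolic-bifurcation machinery supplies. Everything else in the argument is routine: the upper bound, the identity $\dim_{\mathrm{hyp}}J_c=\dim J_c$ on the hyperbolic locus, the lower semicontinuity of $\dim_{\mathrm{hyp}}$, and the density of $\C\setminus\mathcal M$ along $\partial\mathcal M$. Finally, I note that Theorem~\ref{Ransford} provides parameters $c\notin\mathcal M$ with $\dim J_c$ arbitrarily small; since $\C\setminus\mathcal M$ is connected and $c\mapsto\dim J_c$ is continuous on it, the set of attained values $\{\dim J_c:c\in\C\setminus\mathcal M\}$ is in fact exactly the open interval $(0,2)$, the present theorem identifying its supremum.
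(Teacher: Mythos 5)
The paper offers no proof of this statement---it is imported verbatim from Shishikura's Remark~1.1---so there is nothing internal to compare against beyond that citation. Your sketch (the trivial upper bound, hyperbolic dimension and its lower semicontinuity, the identity $\dim_{\mathrm{hyp}}J_c=\dim J_c$ on the hyperbolic locus $\C\setminus\mathcal M$, and parabolic implosion to force $\dim_{\mathrm{hyp}}$ toward $2$ near parabolic parameters on $\partial\mathcal M$) accurately reproduces the strategy of the cited work, and you correctly flag the implosion and distortion estimates as the genuinely hard core that you are deferring to Shishikura rather than re-deriving.
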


These two theorem confirm the  well-known fact  that, for some $c\in\C\setminus\mathcal M,$  the Hausdorff dimension of $J_c$ can be arbitrarily small and, for $c\in\C\setminus\mathcal M,$  the Hausdorff dimension of $J_c$ can be arbitrarily  close to $2.$

The following lemma provides more information regarding the Julia sets $J_c$ for $c$ outside of the Mandelbrot set.  

\begin{lemma}\label{distance}
The Julia set $J_c$ varies continuously with respect to the Hausdorff distance, as $c$ varies in the complement $\C\setminus\mathcal M$ of the Mandelbrot set. 
\end{lemma}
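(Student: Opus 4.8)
The plan is to exhibit the continuity of $c \mapsto J_c$ on $\C \setminus \mathcal{M}$ by exploiting the fact that, for $c$ outside the Mandelbrot set, the Julia set $J_c$ is a hyperbolic (expanding) repeller for $p_c$, and such repellers vary continuously — indeed real-analytically, though we only need continuity — with the parameter. Concretely, I would first recall that for $c \notin \mathcal{M}$ the critical point $0$ escapes to $\infty$, so the postcritical set does not accumulate on $J_c$; standard theory of rational maps then gives that $p_c$ is expanding on a neighbourhood of $J_c$ in some conformal metric, and $J_c$ is the maximal invariant (hence repelling) set in a suitable annular trapping region. The key structural input is holomorphic motion: by the $\lambda$-Lemma circle of ideas (or directly, since $\C \setminus \mathcal{M}$ is simply connected after adding $\infty$ and carrying the natural basepoint issues), the Julia sets $J_c$ move holomorphically in $c$ over $\C \setminus \mathcal{M}$ — there is a holomorphic motion $\Phi : (\C \setminus \mathcal{M}) \times J_{c_0} \to \overline{\C}$ with $\Phi_c(J_{c_0}) = J_c$ and $\Phi_c$ conjugating $p_{c_0}$ to $p_c$ on the Julia sets. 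This is classical (Mañé–Sad–Sullivan) since $c \notin \mathcal{M}$ means $c$ lies in a stable component of the parameter space.

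Granting the holomorphic motion $\Phi$, the continuity in Hausdorff distance follows from continuity of $\Phi$ in both variables together with compactness. Precisely, fix $c_0 \in \C \setminus \mathcal{M}$ and let $c_n \to c_0$ in $\C \setminus \mathcal{M}$. By Theorem~\ref{th:lambda} the motion $\Phi$ extends to a jointly continuous motion of all of $\overline{\C}$ (after restricting to a disc around $c_0$ contained in $\C \setminus \mathcal{M}$ and rescaling the parameter to $\D$), so $\Phi_{c_n} \to \Phi_{c_0} = \mathrm{id}$ uniformly on the compact set $J_{c_0}$. Given $\varepsilon > 0$, for $n$ large every point of $J_{c_n} = \Phi_{c_n}(J_{c_0})$ lies within $\varepsilon$ of the corresponding point $\Phi_{c_0}(\cdot) \in J_{c_0}$, so $J_{c_n}$ is contained in the $\varepsilon$-neighbourhood of $J_{c_0}$; symmetrically, applying the same estimate to $\Phi_{c_n}^{-1}$ (which is also jointly continuous and tends to the identity, again by the $\lambda$-Lemma applied with basepoint $c_n$, or by uniform continuity of the two-variable extension), $J_{c_0}$ lies within $\varepsilon$ of $J_{c_n}$. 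Hence the Hausdorff distance $d_H(J_{c_n}, J_{c_0}) \to 0$, which is the assertion.

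An alternative, more self-contained route avoids naming the holomorphic motion and instead uses the escape-region description: pick a large $R$ so that $|z| \ge R$ implies $p_c^n(z) \to \infty$ and $|z|^2/2 \le |p_c(z)|$ for $c$ in a neighbourhood of $c_0$; the escaping set is $\bigcup_n p_c^{-n}(\{|z| > R\})$, its boundary is $J_c$, and the preimages $p_c^{-n}(\{|z| \ge R\})$ depend continuously on $c$ (they are defined by the polynomial inequalities $|p_c^k(z)| \ge R$ for some $k \le n$, nested and exhausting). One then shows the $\varepsilon$-neighbourhood condition in both directions using expansion: expansion on $J_c$ (uniform for $c$ near $c_0$, since the multipliers of periodic points vary continuously and stay bounded away from the unit circle) forces preimages to shrink geometrically, so finitely many iterates already locate $J_c$ within $\varepsilon$, and the finite data vary continuously in $c$.

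I expect the main obstacle to be the bookkeeping needed to legitimately invoke a \emph{global}, jointly continuous holomorphic motion: the $\lambda$-Lemma as stated is over the disc $\D$ with a basepoint, whereas $\C \setminus \mathcal{M}$ is our parameter space, so one must localise — restrict to a round disc $\Delta \subset \C \setminus \mathcal{M}$ centred at $c_0$, reparametrise $\Delta$ conformally by $\D$, verify that $c \mapsto J_c$ genuinely \emph{is} a holomorphic motion there (this uses that $c_0 \notin \mathcal{M}$, via the absence of critical orbit relations on $J_c$ and the Mañé–Sad–Sullivan theory of stable components), and only then apply Theorem~\ref{th:lambda} to get joint continuity and the quasiconformal control. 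The remaining work — converting uniform convergence of $\Phi_{c_n} \to \mathrm{id}$ on $J_{c_0}$ into Hausdorff convergence — is routine. I would also remark that continuity can \emph{fail} at parameters on $\partial \mathcal{M}$, which is exactly why the hypothesis $c \in \C \setminus \mathcal{M}$ (an open, in fact "stable", set) is essential.
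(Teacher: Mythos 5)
Your proof is correct and rests on the same foundational input as the paper's, namely the Ma\~n\'e--Sad--Sullivan structural stability of $p_c$ over the escape locus $\C\setminus\mathcal M$. The paper's ``proof'' is essentially a citation to \cite{MSS} together with an appeal to folklore about expanding maps; you supply the actual bookkeeping, obtaining the conjugating holomorphic motion of $J_c$ directly from \cite{MSS}, extending it to a jointly continuous motion of $\overline\C$ via the $\lambda$-Lemma, and then converting the locally uniform convergence $\Phi_{c_n}\to\mathrm{id}$ on the compact set $J_{c_0}$ into Hausdorff convergence. One difference in organisation is worth flagging: in the paper Lemma~\ref{distance} is stated \emph{first} and then combined with \cite[Thm.~7.2]{M} to deduce that $J_c$ moves holomorphically, whereas you run the implication the other way. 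This is not circular, because \cite{MSS} establishes the holomorphic motion over the $J$-stable locus without needing Hausdorff continuity as a hypothesis, and you invoke it directly; so both orderings are legitimate. Your alternative self-contained route (continuity of the nested trapping regions $p_c^{-n}(\{|z|\ge R\})$ plus uniform expansion near $J_{c_0}$) is also sound and arguably closer in spirit to what the paper calls ``folklore'' about expanding maps, though you only sketch it.
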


\begin{proof}

A short proof is simply to refer to \cite{MSS}.
Actually \cite{MSS} proves a much stronger statement about ''structural stability'' of rational maps.  One must use the fact that a critical orbit does not change its behavior (escaping to infinity) in the complement of the Mandelbrot set.  
The statement of the lemma has been known as folklore.  There is a general theory of structural stability of (not necessarily holomorphic) dynamical systems.
Starting from the Smale school in the west, (and there was a counter part in the Soviet Union), it has been well known that when a map is hyperbolic (in the dynamical sense) or expanding (in the case of complex dynamics), the dynamical behavior does not change when you perturb it.  So before the paper \cite{MSS}, there was a wide consensus that an expanding map (for example, $z^2+c$ with $c$ in the complement of M) is structurally stable.  

\end{proof}

Combining Lemma \ref{distance} with \cite[Theorem 7.2]{M} we have that the Julia set $J_c $   moves locally in $\C\setminus\mathcal M$ by a conjugating holomorphic motion.  
 It then follows from  \cite{MSS}, that
Julia sets $J_c,$ where $c$ lies outside of the Mandelbrot set, are locally isotopic by a holomorphic motion.  

Hence, by the $\lambda$-lemma (Theorem \ref{th:lambda}), they are  locally quasiconformally equivalent 
and since $\C\setminus\mathcal M$ is connected, they are all quasiconformally equivalent. Now, invoking Theorems \ref{Ransford} and \ref{Shishikura},
this provides another way of obtaining a compact set $L$ satisfying Corollary \ref{qcL} and Corollary \ref{holomorphicmotionL}.


\section{Polynomial and rational  approximation}

For an introduction to complex approximation in one variable, we recommend Gaier's book \cite{G} and for a recent overview of uniform complex approximation in both one and several variables, see the excellent survey by Forn\ae ss, Forstneri$\rm{\check{c}}$ and Wold
\cite{FFW}.

When we say that a function is holomorphic on a set $ K \subset\C,$ we mean that it is holomorphic on an open set containing $ K.$ For a compact set $K\subset\C,$  $P(K)$ denotes the functions on $K$ which are uniform limits on $K$  of polynomials and  $R(K)$ the functions on $K$ which are uniform limits of  rational functions having no poles on $K.$   We denote by $A( K)$ the class of functions  which are continuous on $K$ and holomorphic on the interior $ K^\circ$ of $ K.$

For every compact set $K\subset\C,$ 
\begin{equation*}\label{inclusions}
P(K)\subset R(K)\subset   A(K) \subset C(K).
\end{equation*}
These inclusions are in general strict and the fundamental problem in complex approximation is, for each of these inclusions, to characterize those sets for which we have equality. 

Let us say that $K$ is a set of polynomial approximation if $P(K)=A(K).$ 
Mergelyan's theorem (see \cite{G}) gives a topological characterization for set to be a set of polynomial approximation.  Namely, $K$ is a set of polynomial approximation if and only if its complement $\C\setminus K$ is connected.     

Let us say that $K$ is a set of rational approximation if $R(K)=P(K).$ A characterization for sets of rational approximation was given by Vitushkin (see \cite{G}) in terms of continuous analytic capacity. There is no topological characterization for sets of rational approximation (see \cite{GHL}).  That is, there exists a homeomorphism $h:\C\to\C,$ such that $R(K)=A(K)$ but $R\big(h(K)\big)\not= A\big(h(K)\big).$ Thus,  the property of being a compact set of rational approximation is not topologically invariant. Our main objective is to show  that it is also not invariant under quasiconformal transformations and consequently not invariant under holomorphic motion.


\section{A holomorphic motion which does not preserve rational approximation}

Let us denote by $\partial_iK$ the {\it inner boundary} of a compact set $K\subset\C,$ that is, the set of all boundary points which are not boundary points of some complementary component.

\begin{theorem} [ Davie-{\O}ksendal \cite{DO}] \label{inner boundary}
For a compact set $K\subset\C,$ 
if $\dim\partial_i K<1,$ then $R(K)=A(K).$ 
\end{theorem}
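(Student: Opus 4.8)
The plan is to derive the theorem from Vitushkin's constructive theory of rational approximation, the hypothesis $\dim\partial_iK<1$ entering only through a classical removability fact. Since $\dim\partial_iK<1$, the definition of Hausdorff dimension forces $\mathcal H^1(\partial_iK)=0$ (indeed $\mathcal H^d(\partial_iK)<\infty$ for some $d<1$). By Painlev\'e's theorem a compact set of zero length has vanishing analytic capacity, hence also vanishing continuous analytic capacity $\alpha$; thus $\partial_iK$, and every Borel subset of it, is removable for bounded holomorphic functions, so that adjoining such a set to any planar set leaves $\alpha$ unchanged.

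Next one would fix $f\in A(K)$ and $\varepsilon>0$ and extend $f$ to a compactly supported continuous function on $\C$, still denoted $f$, with modulus of continuity $\omega_f$. For small $\delta>0$ choose a $C^\infty$ partition of unity $\{\varphi_j\}$ subordinate to a cover of $\C$ by open disks $\Delta_j$ of radius $\asymp\delta$ with bounded overlap, $\|\nabla\varphi_j\|_\infty\lesssim\delta^{-1}$ and $\sum_j\varphi_j\equiv1$ near the support of $f$, and form the Vitushkin localizations $f_j=T_{\varphi_j}f$, so that $\op f_j=\varphi_j\,\op f$. Then $f=\sum_jf_j$, each $f_j$ is continuous on $\C$ and holomorphic off the compact set $E_j=\overline{\Delta_j}\setminus K^\circ$, and $\|f_j\|_\infty\le C\,\omega_f(\delta)$ with $C$ absolute (here the continuity of $f$ is essential). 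Because the $\Delta_j$ have bounded overlap it suffices, for each $j$, to produce a rational function $r_j$ with poles in $\C\setminus K$ satisfying $\|f_j-r_j\|_K\le C'\omega_f(\delta)$, $C'$ absolute --- those $f_j$ with $\overline{\Delta_j}\cap K=\emptyset$ being holomorphic near $K$, hence approximable by Runge's theorem --- for then $r=\sum_jr_j$ satisfies $\|f-r\|_K\le C''\omega_f(\delta)\to0$ as $\delta\to0$.

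The construction of $r_j$ is the heart of the matter. The singular set $E_j$ splits as $(\overline{\Delta_j}\setminus K)\cup(\overline{\Delta_j}\cap\partial_iK)\cup\bigcup_U(\overline{\Delta_j}\cap\partial U)$, where $U$ runs over the complementary components of $K$. Near a point of $E_j$ at which $\C\setminus K$ is non-negligible at scale $\delta$ --- in particular near $\partial U$ for a component $U$ meeting $\Delta_j$ --- Vitushkin's approximation lemma furnishes a rational function with poles in such a $U$ absorbing the corresponding part of $f_j$ with error $\lesssim\omega_f(\delta)$; the part of the singularity of $f_j$ left unresolved is then carried by $\overline{\Delta_j}\cap\partial_iK$ together with a set of arbitrarily small continuous analytic capacity, which, since $\alpha(\partial_iK)=0$ by the first step, contributes nothing. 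This yields $r_j$ as required. Alternatively one may avoid the explicit scheme and verify the relevant case of Vitushkin's capacitary criterion: $R(K)=A(K)$ provided $\alpha(\Delta\setminus K)=\alpha(\Delta\setminus K^\circ)$ for every open disk $\Delta$; writing $\Delta\setminus K^\circ=\Delta\cap\overline{\C\setminus K}$, the two sets differ by $\Delta\cap\partial K=(\Delta\cap\partial_iK)\cup\bigcup_U(\Delta\cap\partial U)$, the first piece being $\alpha$-null by removability and the second being harmless because $\bigcup_U\overline U=\C\setminus(K^\circ\cup\partial_iK)$, so that off $\partial_iK$ the two capacities already coincide.

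I expect the main obstacle to be precisely this last point: making the uniform local estimate $\|f_j-r_j\|_\infty\lesssim\omega_f(\delta)$ rigorous and, above all, showing that the inner boundary is the only possible obstruction to local approximation --- that is, correctly isolating the ``unresolved'' part of each $f_j$ and dominating it via $\alpha(\partial_iK)=0$. This is the only step where the hypothesis $\dim\partial_iK<1$ is genuinely used; the localization identity $f=\sum_jf_j$, the bounded-overlap assembly, and the reduction of the removability statement to Painlev\'e's theorem are all routine.
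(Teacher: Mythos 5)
The paper itself does not prove this statement; Theorem~\ref{inner boundary} is quoted verbatim from Davie--{\O}ksendal and used as a black box, with the citation \cite{DO} serving as the entire proof. So there is no internal argument to compare yours against -- you are attempting an independent proof of a deep 1982 theorem, and the comparison has to be against the original \cite{DO}, which does \emph{not} proceed by Vitushkin localization but by fine potential theory (the title of \cite{DO} is ``Analytic capacity and differentiability properties of finely harmonic functions'').

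There are genuine gaps in your proposed route. The opening chain $\dim\partial_iK<1 \Rightarrow \mathcal H^1(\partial_iK)=0 \Rightarrow \alpha(\partial_iK)=0$ is fine. But the reduction to Vitushkin's criterion $\alpha(\Delta\setminus K)=\alpha(\Delta\setminus K^\circ)$ is not correctly closed. First, your claim that ``adjoining such a set to any planar set leaves $\alpha$ unchanged'' is not a triviality here: $\partial_iK$ is merely Borel (typically a $G_\delta$ inside $\partial K$, not compact), and the singular set $F$ of an admissible $f$ for $\alpha(\Delta\setminus K^\circ)$ satisfies only $F\subset (\Delta\setminus K)\cup\partial_iK$ with $F\cap\partial_iK$ not relatively open in $F$; the standard removability argument (peel off a relatively open zero-length piece and shrink $F$) does not reduce $F$ to a subset of $\Delta\setminus K$, so $|f'(\infty)|\le\alpha(\Delta\setminus K)$ does not follow. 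Second, and more seriously, your disposal of the outer-boundary part is invalid as stated: you argue that $\bigcup_U(\Delta\cap\partial U)$ is ``harmless because off $\partial_iK$ the two capacities already coincide,'' which amounts to saying that a set and its closure have the same continuous analytic capacity. That is false in general (e.g.\ a countable dense set in a segment has $\alpha=0$ while the segment does not), and it is precisely the interplay between $\Delta\setminus K$ and its closure $\Delta\cap\overline{\C\setminus K}=\Delta\setminus K^\circ$ that Vitushkin's criterion is designed to test. Finally, any attempt to run the localization scheme honestly -- estimating each $\|f_j-r_j\|_\infty$ by transferring the singularity of $f_j$ from $\overline\Delta_j\setminus K^\circ$ into $\C\setminus K$ -- needs a semi-additivity inequality of the form $\alpha(A\cup B)\lesssim\alpha(A)+\alpha(B)$ to combine the $\alpha$-null piece with the rest; for $\alpha$ that is Tolsa's 2003 theorem, which was unavailable to Davie and {\O}ksendal and is not something you can invoke in a ``routine'' assembly step. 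In short: the Painlev\'e input is correct, but the bridge from ``$\alpha(\partial_iK)=0$'' to ``$R(K)=A(K)$'' is the content of the theorem, and your sketch does not supply it.
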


Let us recall (see \cite{G}) the definition of the continuous analytic capacity $\alpha(M)$ of a set $M\subset \C.$ Denote by $X(M)$ the class of functions $f$ that are continuous on $\C$ satisfying $f(z)\to 0$ as $z\to \infty,$ are bounded by $1$ and are holomorphic outside of a compact subset of $M.$ In a neighborhood of $\infty$ such a function has an expansion 
$$
	f(z) = \sum_{n=1}^\infty\frac{c_n(f)}{z^n}
$$
and   $f^\prime(\infty)$ is defined as follows:
$$
	f^\prime(\infty)=c_1(f)=\lim_{z\to\infty}zf(z) = \frac{1}{2\pi i}\int_{|z|=r}f(z)dz, \quad \mbox{for large $r$}.
$$
Then, $\alpha(M)=\sup\{|c_1(f)|: f\in X(M)\}.$

 For ``large" Hausdorff dimension, we have the following (see for example  \cite[Page 229, lines 7,8]{Y}. 

\begin{lemma}\label{alpha}
For a  set $L\subset\C,$ if dim$(L)>1,$ then $\alpha(L)>0$.
\end{lemma}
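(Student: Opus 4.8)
The plan is to produce an explicit competitor in the class $X(L)$ with non-vanishing coefficient $c_1$, namely the Cauchy transform of a Frostman measure carried by $L$. Since $X(L')\subset X(L)$ whenever $L'\subset L$, the capacity $\alpha$ is monotone under inclusion, so it suffices to find a \emph{compact} subset $E\subset L$ for which such a function exists.

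First I would fix a real number $s$ with $1<s<\dim L$; since $\dim L\le 2$ we may also arrange $s<2$. By the definition of Hausdorff dimension, $\mathcal H^{s}(L)=+\infty$, and hence $L$ contains a compact subset $E$ with $0<\mathcal H^{s}(E)<+\infty$ (the standard subset theorem for Borel, or more generally analytic, sets; in the applications below $L$ will in any case be compact). By Frostman's lemma there is a non-zero positive Radon measure $\mu$ with $\operatorname{supp}\mu\subset E$ and linear-type growth
$$
\mu\big(B(x,r)\big)\le r^{s}\qquad\text{for all }x\in\C,\ r>0 .
$$
Set $g(z)=\int (\zeta-z)^{-1}\,d\mu(\zeta)$ for $z\in\C$.

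Next I would verify that, after normalization, $g$ lies in $X(L)$ with $c_1\neq 0$. Integrating the distribution function and using the growth bound with exponent $s>1$ gives, uniformly in $z$,
$$
\int\frac{d\mu(\zeta)}{\abs{\zeta-z}}\le\frac{s}{s-1}\,\mu(\C)^{1-1/s}<\infty ,
$$
so $g$ is bounded on $\C$; splitting the integral for $g(z)-g(w)$ into the regions $\abs{\zeta-z}<2\abs{z-w}$ and $\abs{\zeta-z}\ge 2\abs{z-w}$ and again using only $\mu(B(x,r))\le r^{s}$ with $1<s<2$ shows $\abs{g(z)-g(w)}\le C_{s}\abs{z-w}^{s-1}$, so $g$ is Hölder continuous on all of $\C$. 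Clearly $g$ is holomorphic on $\C\setminus\operatorname{supp}\mu$, with $\operatorname{supp}\mu$ a compact subset of $L$, and $g(z)\to 0$ as $z\to\infty$; expanding $(\zeta-z)^{-1}$ in powers of $1/z$ for large $\abs{z}$ yields $g(z)=-\mu(\C)/z+O(1/z^{2})$, so $c_1(g)=-\mu(\C)\neq 0$. Hence $f:=g/\norm{g}_{\infty}\in X(E)\subset X(L)$ with $\abs{c_1(f)}=\mu(\C)/\norm{g}_{\infty}>0$, and therefore $\alpha(L)\ge\abs{c_1(f)}>0$.

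The main obstacle, and the only point needing genuine care, is the continuity of the Cauchy transform $g$ on the \emph{whole} plane rather than merely off $E$: this is precisely where the strict inequality $s>1$ enters, and it is the reason the statement is about the continuous analytic capacity $\alpha$ and not about the ordinary analytic capacity $\gamma$ (for $\gamma$ the analogous conclusion already holds when $\dim L=1$). The passage to a compact subset of positive finite $\mathcal H^{s}$-measure is routine for the sets that occur in this paper.
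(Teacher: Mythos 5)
The paper does not actually prove Lemma \ref{alpha}; it simply cites \cite[p.~229]{Y}. Your proposal supplies the details, and it is the standard argument one finds in that reference and in every textbook treatment of analytic capacity: pick $1<s<\dim L$, extract a compact $E\subset L$ of positive finite $\mathcal H^s$-measure (the Besicovitch--Davies subset theorem, valid since $E$ is taken inside an analytic set), take a Frostman measure $\mu$ on $E$ with growth $\mu(B(x,r))\le r^s$, and observe that its Cauchy transform $g(z)=\int(\zeta-z)^{-1}\,d\mu(\zeta)$ is bounded and Hölder continuous of exponent $s-1$ precisely because $s>1$, is holomorphic off $E$, vanishes at infinity, and has $c_1(g)=-\mu(\C)\ne0$; normalizing gives a legitimate competitor in $X(L)$, hence $\alpha(L)>0$.

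Your computations are correct (I checked the bound $\tfrac{s}{s-1}\mu(\C)^{1-1/s}$ and the splitting argument for the Hölder estimate), and you correctly flag the two points that genuinely matter: (i) continuity across $E$, not just off $E$, is what requires $s>1$ and is why the result concerns $\alpha$ rather than $\gamma$; and (ii) the subset theorem is being applied to an analytic (in fact compact) set, which is the case in this paper. So this is a correct, self-contained replacement for the paper's bare citation, taking the route the citation itself is surely pointing to.
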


\bigskip
\noindent
\begin{theorem} \label{th:mainnotinvariant}
There is a compact set $K\subset\C$ and a  quasiconformal  mapping $h:\C\to\C$ such that 
$$
	R(K)=A(K) \quad \mbox{but} \quad R(h(K))\not=A(h(K)). 
$$
 Thus, rational approximation is not invariant under quasiconformal  mappings. 

\end{theorem}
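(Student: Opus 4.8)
The plan is to realize $K$ as a ``Swiss cheese over $L$'', where $L$ is the compact set of Corollary \ref{qcL}, and then to detect the failure of rational approximation after the quasiconformal deformation by means of Vitushkin's capacity criterion. So let $L$ and the quasiconformal mapping $h\colon\C\to\C$ be as in Corollary \ref{qcL}, with $\dim L<1$ and $\dim h(L)>1$; after an affine change of coordinates (which affects neither the hypotheses nor the conclusion) I assume $L\subset\frac12\D$. I would choose a countable family of pairwise disjoint open discs $\Delta_i$ with $\overline{\Delta_i}\subset\D\setminus L$ whose closures accumulate on every point of $L$ but are otherwise as small as desired, and set $K:=\overline\D\setminus\bigcup_i\Delta_i$. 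Then $K$ is compact, has nonempty interior once the discs have small total area, the complementary components of $\C\setminus K$ are $\C\setminus\overline\D$ together with the $\Delta_i$, and a short verification (the accumulation points of $\{\Delta_i\}$ that lie in no single $\overline{\Delta_i}$ are exactly the points of $L$) shows that the inner boundary $\partial_iK$ equals $L$. Since $\dim\partial_iK=\dim L<1$, the Davie--{\O}ksendal theorem (Theorem \ref{inner boundary}) yields $R(K)=A(K)$.

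Since $h$ is a homeomorphism of $\C$ (in particular open and surjective), $h(K)$ is compact with $h(K)^{\circ}=h(K^{\circ})\neq\emptyset$, and the inner boundary being invariant under homeomorphisms of $\C$ we get $\partial_ih(K)=h(\partial_iK)=h(L)$, of Hausdorff dimension $>1$. By Lemma \ref{alpha}, $\alpha(h(L))>0$; and since Hausdorff dimension is the supremum of its local values and this supremum is attained, there is $w_0\in h(L)$ with $\dim(h(L)\cap B(w_0,r))>1$ for all $r>0$. Fixing $r$ so small that $\overline{B(w_0,r)}\subset h(\D)$ and putting $\Delta:=B(w_0,r)$, Lemma \ref{alpha} again gives $c_0:=\alpha(h(L)\cap\Delta)>0$. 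Crucially, $\Delta$ and $c_0$ depend only on $L$ and $h$, so they may be fixed before the discs $\Delta_i$ are specified.

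The decisive tool is Vitushkin's criterion (see \cite{G}): if $R(X)=A(X)$, then $\alpha(\Delta'\setminus X)=\alpha(\Delta'\setminus X^{\circ})$ for every open disc $\Delta'$. I would violate it for $X=h(K)$ at the disc $\Delta$. On one side, $\Delta\setminus h(K)^{\circ}\supseteq\Delta\cap h(L)$, so $\alpha(\Delta\setminus h(K)^{\circ})\ge c_0$. On the other side, $\overline\Delta\subset h(\D)\subset h(\overline\D)$, so $\Delta$ misses the unbounded complementary component and $\Delta\setminus h(K)=\Delta\cap\bigcup_i h(\Delta_i)$; hence, by the elementary estimate $\alpha(\bigcup_i E_i)\le\sum_i\operatorname{diam}E_i$ (contour integration of an admissible function along the boundary of a finite sub-union of enclosing discs), $\alpha(\Delta\setminus h(K))\le\sum_i\operatorname{diam}h(\Delta_i)$. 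Now the one point that makes the argument work: the discs $\Delta_i$ are required only to accumulate on $L$ --- a condition on their location, not their size --- so, by uniform continuity of $h$ on $\overline\D$, we may shrink each $\Delta_i$ until $\operatorname{diam}h(\Delta_i)<\eta_i$, where the $\eta_i>0$ are chosen in advance with $\sum_i\eta_i<c_0/2$, while keeping the $\overline{\Delta_i}$ pairwise disjoint, disjoint from $L$, and still accumulating on all of $L$. With this choice $\alpha(\Delta\setminus h(K)^{\circ})\ge c_0>c_0/2>\alpha(\Delta\setminus h(K))$, so Vitushkin's criterion fails for $h(K)$ and $R(h(K))\neq A(h(K))$.

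I expect the only genuinely substantial ingredient to be Vitushkin's criterion; constructing $K$, identifying $\partial_iK$, and estimating the capacity of the images of the removed discs are routine, the real observation being simply that removing very small discs already creates the prescribed inner boundary while their quasiconformal images remain negligible in capacity --- exactly the slack afforded by $\dim L<1<\dim h(L)$. Combined with Theorem \ref{embedding}, which embeds $h$ in a holomorphic motion, this also gives the non-invariance of rational approximation under holomorphic motion.
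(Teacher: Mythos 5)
Your argument is correct, and it takes a genuinely different route from the paper's. The paper constructs the Swiss cheese in the \emph{image}: it sets $\widetilde L=h(L)$, removes discs $D_j$ accumulating on $\widetilde L$ with $\sum r_j<\alpha(\widetilde L)/2$, defines $\widetilde K=\overline\D\setminus\bigcup D_j$, and then pulls back $K:=h^{-1}(\widetilde K)$; the failure of $R(\widetilde K)=A(\widetilde K)$ is detected by exhibiting an explicit annihilating measure $\mu=dz_{\partial\D}-\sum dz_{\partial D_j}$ which kills $R(\widetilde K)$ by Cauchy's theorem but pairs nontrivially with a near-extremal function for $\alpha(\widetilde L)$. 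You instead build the Swiss cheese in the \emph{source} around $L$ itself, push forward by $h$, and detect the failure through Vitushkin's localization criterion. The paper's image-side construction has the small advantage that the removed sets in $\widetilde K$ are genuine discs, so the estimate $\sum 2\pi r_j$ is immediate; your source-side construction has to control $\alpha$ of a union of quasiconformal images $h(\Delta_i)$, which you handle correctly: enclosing each $\overline{h(\Delta_i)}$ in a disc of radius $\operatorname{diam} h(\Delta_i)$ and applying Cauchy's theorem over the unbounded component of the complement of the union of those enclosing discs (whose boundary has length at most $\sum 2\pi\,\operatorname{diam} h(\Delta_i)$, being a subset of the union of the circles) gives $\alpha\bigl(\bigcup_i h(\Delta_i)\bigr)\le\sum_i\operatorname{diam} h(\Delta_i)$ without any appeal to semiadditivity. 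The one organizational point to make explicit is logical order: $\Delta$ and $c_0$ must be fixed first (as you note, they depend only on $L$ and $h$), and the modulus of continuity of $h$ on $\overline\D$ --- or, more simply, its H\"older continuity from property (4) of quasiconformal maps --- is then used to choose the discs $\Delta_i$ so small that $\sum_i\operatorname{diam} h(\Delta_i)<c_0/2$. The localization step (finding $w_0$ with $\dim(h(L)\cap B(w_0,r))>1$ for all $r>0$) is justified by compactness together with the countable stability of Hausdorff dimension. Both arguments are Swiss cheese arguments at heart, and indeed the explicit measure in the paper's proof is morally a hands-on version of the capacity inequality you extract from Vitushkin's theorem; your version trades that explicit computation for an off-the-shelf criterion, at the cost of the extra (but elementary) diameter estimate for images of discs.
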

 
\begin{proof}

To give such an example, 
let $L$ and $h$ be as in {Corollary \ref{qcL}} and denote $\widetilde L = h(L).$ 
We may suppose that both $L$ and $\widetilde L$ are contained in the open unit disc.
Note that $\widetilde L$ is a compact subset of the open unit disc having no interior.
Let  $\widetilde K=\D\setminus \cup_j D_j,$  where the $D_j$ are open discs,  whose closures are disjoint and  contained in $\D\setminus\widetilde L,$ such that 
 $\widetilde L$ is the set of accumulation points of the sequence $\{a_j \}_{j=1}^\infty$ of centers of the $D_j$'s.  Thus $\widetilde L$ is the inner boundary of $\widetilde K$. The discs are chosen so small that  $\sum 2\pi r_j< 2\pi\alpha(\widetilde L)/2,$  where the $r_j$'s are the respective radii of the $D_j$'s. Set $K=h^{-1}(\widetilde K).$ Then $K$ is a compact set with inner boundary $L.$ Thus, 
$A(K)=R(K)$  by Theorem \ref{inner boundary}, since $dim (L)<1.$
 
There remains to show that  $R(\widetilde K)\not=A(\widetilde K).$
Since $dim(\widetilde L)>1,$ it follows from Lemma \ref{alpha} that $\alpha(\widetilde L)>0$, so we can find $f \in C(\mathbb{C})$, 
holomorphic outside $\widetilde L$ and bounded by $1,$ with $f(\infty)=0$ and $|f^\prime(\infty)|> \alpha(\widetilde L )/2,$ where
$f'(\infty)= \lim_{z\to \infty} (zf(z))=(1/2\pi i)\int_{\partial\D} f(z)dz \neq 0$. 
We claim that $f\in A(\widetilde K)\setminus R(\widetilde K).$ Clearly, $f\in A(\widetilde K),$ since $\widetilde K^\circ\cap \widetilde L=\emptyset.$ 

To show that $R(\widetilde K)\not=A(\widetilde K)$,  
it is sufficient to find a Borel measure $\mu$ on $\widetilde K,$ which is orthogonal to $R(\widetilde K)$ but not orthogonal to $A(\widetilde K).$ We claim that  $\mu=dz_{\partial\D} -\sum_j  dz_{\partial D_j}$ is such a measure. 

Let $g$ be a rational function having no poles on $\widetilde K.$ By renumbering the $D_j$'s, we may assume that $D_1,\ldots, D_k$ are the only $D_j$'s containing poles of $g.$ By the Cauchy theorem $\int_{\partial D_j}g(z) dz=0,$  for $j>k.$ Hence 
$$
	\int g(z)d\mu(z) = \int_{\partial \D}g(z)dz - \sum_{j=1}^k\int_{\partial D_j}g(z)dz = 0,
$$
because  $g$ is holomorphic on the domain bounded by $\partial D$ and $\partial D_j, \, j=1,\ldots,k.$ 
Thus $\mu$ is orthogonal to $R(\widetilde K).$ 

To see that $\mu$ is not orthogonal to $A(\widetilde K),$ it suffices to check that  
 $\int f(z)d\mu (z) \not=0.$ Indeed, 
$$
	\left|\int f(z)d\mu(z)\right| \ge \left|\int_{\partial\D} f(z)dz\right| - \sum_{j=1}^\infty 2\pi r_j >
2\pi\alpha(\widetilde L)/2-2\pi \alpha(\widetilde L)/2=0.
$$
Thus $f \in A(\widetilde K)\setminus R(\widetilde K)$ because the measure $\mu=dz_{\partial\D} -\sum_j  dz_{\partial D_j}$
is orthogonal to $R(\widetilde K)$, but not to $ f$. 
This concludes the validation of Theorem \ref{th:mainnotinvariant}. 
\end{proof}

\begin{theorem}
There is a compact set $K\subset\C,$  a holomorphic motion of $K,$  $H:\C\times K\to\C$ and a $\lambda\in\D,$  such that 
$$
	R(K)=A(K) \quad \mbox{but} \quad R(H_\lambda(K))\not=A(H_\lambda(K)). 
$$
Thus, rational approximation is not invariant under holomorphic motion. 
\end{theorem}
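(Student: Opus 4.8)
The plan is to derive this as an immediate consequence of the quasiconformal version already proved in Theorem~\ref{th:mainnotinvariant}, using the embedding result Theorem~\ref{embedding}; the reasoning is entirely parallel to the way Corollary~\ref{holomorphicmotionL} was obtained from Corollary~\ref{qcL}, so no essentially new argument is needed.

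Concretely, I would proceed in three short steps. First, I would apply Theorem~\ref{th:mainnotinvariant} to obtain a compact set $K\subset\C$ and a quasiconformal mapping $h:\C\to\C$ with $R(K)=A(K)$ but $R(h(K))\neq A(h(K))$. Second, I would apply Theorem~\ref{embedding} to the map $h$: this produces a holomorphic motion $f:\D\times\C\to\C$, with some base point $\lambda_0$ (so that $f_{\lambda_0}$ is the identity on $\C$), together with a parameter $\lambda\in\D$ for which $f_\lambda=h$. Third, I would let $H$ be the restriction of $f$ to $\D\times K$. Since the three axioms in the definition of a holomorphic motion place no restriction whatsoever on the set being moved, they transfer verbatim from $f$ to $H$: the map $H(\lambda_0,\cdot)$ is the identity on $K$; each $H(\mu,\cdot)$ is injective on $K$, being a restriction of the injection $f(\mu,\cdot)$; and $\mu\mapsto H(\mu,z)$ is holomorphic for every fixed $z\in K$. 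Thus $H$ is a holomorphic motion of $K$, and since $H_\lambda(K)=f_\lambda(K)=h(K)$ with $K$ compact, the conclusion $R(K)=A(K)$ while $R(H_\lambda(K))\neq A(H_\lambda(K))$ is exactly what Theorem~\ref{th:mainnotinvariant} furnishes. (Theorem~\ref{embedding} produces the motion over $\D$; since the statement needs only one parameter value $\lambda\in\D$ at which $H_\lambda=h$ on $K$, this suffices, and a minor variant of the construction lets the parameter range over a disc strictly containing $\overline\D$.)

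I do not anticipate any real obstacle in this final step: the whole content of the theorem is carried by results already in place --- Theorem~\ref{th:mainnotinvariant} (which itself rests on the Davie--{\O}ksendal criterion \cite{DO}, the capacity lower bound of Lemma~\ref{alpha}, and the non-invariance of Hausdorff dimension under quasiconformal maps, Corollary~\ref{qcL}) and the embedding Theorem~\ref{embedding} (equivalently, the $\lambda$-Lemma together with \cite[Theorem~12.5.3]{AIM}). The only genuinely new point to record is the purely formal observation that the restriction of a holomorphic motion of $\C$ to a compact subset is again a holomorphic motion of that subset, which is immediate from the definition since none of the three defining conditions imposes any hypothesis on the moving set.
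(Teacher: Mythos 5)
Your proposal is correct and is essentially the paper's own proof: take $K$ and $h$ from Theorem~\ref{th:mainnotinvariant}, embed $h$ into a holomorphic motion of $\C$ via Theorem~\ref{embedding}, and restrict that motion to $\D\times K$. No substantive difference.
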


\begin{proof}
Let $K$ and $h$ be as in Theorem \ref{th:mainnotinvariant}. By Theorem \ref{embedding}, there is a holomorphic motion $H:\D\times\C\to\C$ and a $\lambda\in\D,$ such that $H_\lambda=h.$  The restriction of the holomorphic motion $H$ to $\D\times K$ is the desired holomorphic motion of $K.$ 
\end{proof}

{\it Remark.} Holomorphic motion is a strong form of isotopy. In 1980 Scheinberg
 \cite{Scheinberg} used quasiconformal  mappings to show the non-invariance under analytic isotopy of a certain approximation property for closed subsets of Riemann surfaces.   The approximation property he considered has no bearing on rational approximation  and his techniques are different from ours.


\section{Holomorphic motion and the Riemann Hypothesis}

In a fundamental paper in  1987, Bhaskar Bagchi \cite{B1987}  presented a statement equivalent to the Riemann Hypothesis in terms of topological dynamics, showing that the Riemann Hypothesis holds if and only if the Riemann zeta-function satisfies the conclusion of the Voronin Universality Theorem. In 2018, Tomoki Kawahira \cite{K2018} gave a version of the Riemann Hypothesis in terms of complex dynamics.  In the present section, we show that the Riemann Hypothesis holds if and only if a certain explicit mapping is a holomorphic motion.

Instead of considering holomorphic motions   parametrized by  the disc $\D,$ one can consider holomorphic motion   parametrized by  a complex  manifold $\Omega,$ and since the arithmetic properties of $0$ do not enter into the definition of holomorphic motion, we may replace  the point 0  by a
a distinguished point  $\lambda_0\in\Omega$  playing the role of $0$ in $\D.$ For example, in \cite{Chirka2004}, Chirka considers holomorphic motion   parameterized by  a Riemann surface $\Omega$.

\begin{lemma}\label{wierd}
Let $\Omega$ be a complex manifold and $\lambda_0$ a distinguished point of $\Omega.$ Let $E\subset\C$ and $\varphi:\Omega\to\overline\C$ be a holomorphic mapping such that $\varphi(\lambda_0)=0$ and consider
$$
	f:\Omega\times E\to \overline\C, \quad	(\lambda,z)\mapsto z+\varphi(\lambda). 
$$	
The following are equivalent

a)  $\varphi(\lambda)\not=\infty, $	for all $\lambda\in\Omega;$

b) $f$ is a holomorphic motion;

c) $f$ is a holomorphic motion in $\C.$
\end{lemma}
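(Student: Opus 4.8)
The plan is to prove the cyclic chain of implications $a)\Rightarrow c)\Rightarrow b)\Rightarrow a)$, which is the natural route since $c)$ is formally the strongest-looking statement (holomorphic motion in $\C$) while $a)$ is a clean pointwise condition on $\varphi$. The key observation throughout is that the map $f(\lambda,z)=z+\varphi(\lambda)$ is a \emph{translation} in the $z$-variable for each fixed $\lambda$, so injectivity of $z\mapsto z+\varphi(\lambda)$ on $E$ is automatic as soon as the values all lie in $\C$ (a translation of the plane is a bijection); the only thing that can go wrong is that $\varphi$ takes the value $\infty$ somewhere, destroying either the holomorphy-in-$\lambda$ of $z\mapsto z+\varphi(\lambda)$ as a $\C$-valued map, or the injectivity in the sphere (since $z+\infty=\infty$ for every $z$, so at such a parameter $f_\lambda$ collapses all of $E$ to $\infty$, violating injectivity whenever $E$ has more than one point — and if $E$ is a singleton the statement is trivial anyway, a case one can either exclude or note separately).

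For $a)\Rightarrow c)$: assuming $\varphi(\Omega)\subset\C$, I would check the three defining properties of a holomorphic motion in $\C$ directly. First, $f(\lambda_0,z)=z+\varphi(\lambda_0)=z+0=z$ for all $z\in E$, using $\varphi(\lambda_0)=0$. Second, for fixed $\lambda$, $z\mapsto z+\varphi(\lambda)$ is injective on $E$ since it is the restriction of a translation of $\C$. Third, for fixed $z\in E$, $\lambda\mapsto z+\varphi(\lambda)$ is holomorphic on $\Omega$ because it is the sum of the constant $z$ and the holomorphic function $\varphi$ — here it is essential that under hypothesis $a)$ the map $\varphi:\Omega\to\overline\C$ actually takes values in $\C$, so it is a genuine holomorphic ($\C$-valued) function and the translate is too. (If one wants to match the definition's convention with parameter space $\D$ rather than a general $\Omega$, invoke the remark preceding the lemma allowing $\D$ to be replaced by a pointed complex manifold.)

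The implication $c)\Rightarrow b)$ is immediate from the discussion in the introduction: a holomorphic motion of $E$ in $\C$ is automatically a holomorphic motion of $E$ in $\overline\C$ (this is stated explicitly in the excerpt right after the definition of holomorphic motion). Finally, for $b)\Rightarrow a)$, I would argue by contraposition: suppose $\varphi(\lambda_1)=\infty$ for some $\lambda_1\in\Omega$. If $E$ contains at least two distinct points $z_1\neq z_2$, then $f(\lambda_1,z_1)=z_1+\infty=\infty=z_2+\infty=f(\lambda_1,z_2)$ in $\overline\C$, so $f_{\lambda_1}$ is not injective on $E$, contradicting property (2) of a holomorphic motion; hence $b)$ fails. (If $E$ is empty or a singleton, the lemma is vacuous or trivial — a translate $f_\lambda$ is then injective no matter what, and one should either add this as a standing harmless hypothesis or remark that the equivalence is content-free in that degenerate case; I expect the intended reading is $E$ nondegenerate, and I would state this.)

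The main obstacle — really the only subtle point — is the careful handling of the value $\infty$ and the arithmetic $z+\infty=\infty$ on the Riemann sphere, together with the degenerate case $\#E\le 1$. Everything else is a routine verification of the three axioms, trading on the single structural fact that the fibers of $f$ in the $z$-direction are plane translations. I would therefore devote the bulk of the written proof to making precise what "$z+\varphi(\lambda)$" means when $\varphi(\lambda)=\infty$ and why that breaks injectivity, and keep the rest terse.
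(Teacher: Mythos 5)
Your proof is correct and follows essentially the same route as the paper's: both reduce the lemma to checking the injectivity axiom, using that for $\varphi(\lambda)\in\C$ the map $z\mapsto z+\varphi(\lambda)$ is a plane translation (hence injective) while for $\varphi(\lambda)=\infty$ the map $z\mapsto z+\infty$ collapses $E$ to the single point $\infty\in\overline\C$. The one substantive difference is your explicit treatment of the degenerate case: you correctly observe that the implications $b)\Rightarrow a)$ and $b)\Rightarrow c)$ require $E$ to contain at least two points, since for $\# E\le 1$ every $f_\lambda$ is trivially injective (and every $\overline\C$-valued $f(\cdot,z)$ is holomorphic), so $b)$ holds even when $\varphi$ hits $\infty$, while $a)$ and $c)$ fail. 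The paper's proof of $b)\Rightarrow a)$ reads ``Then (3) holds, so $\varphi(\lambda)\ne\infty$ for all $\lambda$'' and silently assumes $\# E\ge 2$; the lemma's hypothesis ``$E\subset\C$'' does not exclude the degenerate sets, so your caveat is a genuine (if minor) catch rather than pedantry. In the paper's application $E=\C$, so nothing breaks downstream, but the lemma as stated is formally false for $\# E\le 1$ and you were right to flag it.
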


\begin{proof} 
The mapping $f$ is a holomorphic motion if and only if it satisfies the following conditions. 
\begin{enumerate}
		\item  $f(\lambda_0,z)=z,$   for all $z\in
		E$; 
		\item For
		every fixed $z\in E$, $f(\cdot,z):\Omega\to\overline\C$ is a holomorphic map;
		\item For every fixed $\lambda\in \Omega$, 
		$f(\lambda,\cdot):E\to\overline\C$ is an injection. 
	\end{enumerate}
Conditions (1) and (2) are satisfied, 
so $f$ is a holomorphic motion if and only if (3) holds. 

a)$ \to$ b) Suppose a).  Then,  for every $\lambda\in\Omega,$ $\varphi(\lambda)\not=
\infty,$ so (3) is satisfied. Thus, $f$ is a holomorphic motion.

b)$\to$ a) Suppose b).  Then (3) holds, so $\varphi(\lambda)\not=\infty,$ for all $\lambda\in\Omega,$ which implies a). 

c)$\to$ b) This is trivial.  

a)$\to$ c) If we have a) we have shown that we have b), so $f(\lambda,z)=z+\varphi(\lambda)$ is a holomorphic motion. Since we have a),  $f$   takes its values in $\C.$ 
\end{proof}

Now, we consider the particular case where $\Omega$ is  the strip $\Omega = \{\lambda\in\C:1/2<\Re \lambda < 3/2\},$ with distinguished point $\lambda=1.$ Since there is a  conformal map from $\D$ to $\Omega$ which sends $0$ to $1,$ holomorphic motion on the strip $\Omega$ is equivalent to standard holomorphic motion on the disc $\D.$ 
Consider the mapping: 
$$
	f:\Omega\times\C\to\overline\C, \quad (\lambda,z)\mapsto z+1/\zeta(\lambda),
$$
where $\zeta(\lambda)$ is the Riemann zeta function.
\begin{theorem}
The following are equivalent:

a) The Riemann hypothesis;

b) $f$ is a holomorphic motion;

c) $f$ is a holomorphic motion in $\C.$
\end{theorem}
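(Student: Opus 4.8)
The plan is to reduce this theorem to Lemma~\ref{wierd}, which has already done essentially all the work. The mapping $f:\Omega\times\C\to\overline\C$ given by $(\lambda,z)\mapsto z+1/\zeta(\lambda)$ is exactly the mapping of Lemma~\ref{wierd} with $E=\C$, $\Omega=\{\lambda:1/2<\Re\lambda<3/2\}$, distinguished point $\lambda_0=1$, and $\varphi(\lambda)=1/\zeta(\lambda)$. So the only things I need to check are: (i) that $\varphi(\lambda_0)=\varphi(1)=0$, and (ii) that $\varphi:\Omega\to\overline\C$ is a holomorphic map; then condition a) of Lemma~\ref{wierd} --- namely $\varphi(\lambda)\ne\infty$ for all $\lambda\in\Omega$ --- becomes precisely the assertion that $\zeta$ has no zeros in the open strip $1/2<\Re\lambda<3/2$, and I must argue this is equivalent to the Riemann Hypothesis.

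First I would verify (i): it is classical that $\zeta$ has a simple pole at $\lambda=1$, so $1/\zeta$ extends holomorphically to $\lambda=1$ with value $0$; hence $\varphi(1)=0$, as required for $\lambda_0=1$ to play the role of the base point. Next, (ii): on the strip $\Omega$, the only singularity of $\zeta$ is the pole at $\lambda=1$, where $1/\zeta$ is holomorphic with a zero; at every other point $\zeta$ is holomorphic, and $1/\zeta$ is meromorphic there with poles exactly at the zeros of $\zeta$. A meromorphic function on $\Omega$ is the same thing as a holomorphic map $\Omega\to\overline\C$, so $\varphi=1/\zeta:\Omega\to\overline\C$ is holomorphic in the sense required by Lemma~\ref{wierd}. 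I would also note the remark already made in the paper: since there is a conformal isomorphism $\D\to\Omega$ sending $0$ to $1$, the notion of holomorphic motion parametrized by $(\Omega,1)$ coincides with the usual notion parametrized by $(\D,0)$, so applying Lemma~\ref{wierd} in this generality is legitimate.

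Now the core step: I claim that condition a) of Lemma~\ref{wierd} for this $\varphi$ --- that is, $1/\zeta(\lambda)\ne\infty$ for all $\lambda\in\Omega$, equivalently $\zeta(\lambda)\ne 0$ for all $\lambda$ with $1/2<\Re\lambda<3/2$ --- is equivalent to the Riemann Hypothesis. One direction is immediate: if RH holds, all nontrivial zeros of $\zeta$ lie on the line $\Re\lambda=1/2$, which is not in the open strip $\Omega$, and there are no trivial zeros in $\Omega$ either (the trivial zeros are at $\lambda=-2,-4,\dots$), so $\zeta$ is zero-free on $\Omega$. For the converse, suppose $\zeta$ is zero-free on the open strip $1/2<\Re\lambda<3/2$. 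By the functional equation $\zeta(\lambda)=\chi(\lambda)\zeta(1-\lambda)$ (with $\chi$ nonvanishing on the critical strip), a zero of $\zeta$ at a point $\rho$ with $0<\Re\rho<1$ forces a zero at $1-\rho$; if $\Re\rho\ne 1/2$ then one of $\rho,1-\rho$ has real part in $(1/2,1)\subset\Omega$, contradicting zero-freeness. Hence every nontrivial zero has $\Re\rho=1/2$, which is RH. (I should state, as standard facts, that $\zeta$ has no zeros with $\Re\lambda\ge 1$ and that the only zeros with $\Re\lambda\le 0$ are the trivial ones at the negative even integers, none of which lie in $\Omega$.)

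With this equivalence established, the theorem follows formally: a) $\Leftrightarrow$ (condition a of Lemma~\ref{wierd}) $\Leftrightarrow$ b) $\Leftrightarrow$ c), the last two equivalences being exactly the content of Lemma~\ref{wierd}. I do not anticipate a genuine obstacle here --- the proof is essentially a dictionary translation --- but the one point requiring care is making sure the functional-equation argument in the converse direction is airtight, in particular handling the boundary behavior and confirming that no zeros are being missed on the edges $\Re\lambda=1/2$ or $\Re\lambda=3/2$ of the strip (these edges are not in the open strip, so they are irrelevant to zero-freeness on $\Omega$, but one should say so explicitly). Once that is pinned down, the rest is a direct invocation of Lemma~\ref{wierd}.
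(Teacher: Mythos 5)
Your proposal is correct and follows essentially the same route as the paper: apply Lemma~\ref{wierd} with $\varphi=1/\zeta$ on the strip $\Omega$ with base point $\lambda_0=1$, and then observe that zero-freeness of $\zeta$ on $\{1/2<\Re\lambda<3/2\}$ is equivalent to RH via the symmetry of the nontrivial zeros about the critical line (the paper states this symmetry directly where you invoke the functional equation). Your additional checks that $\varphi(1)=0$ and that $\varphi$ is holomorphic as a map into $\overline\C$ are worthwhile details the paper leaves implicit.
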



\begin{proof}
Set $\varphi(\lambda)=1/\zeta(\lambda).$ By Lemma \ref{wierd} it is sufficient to verify that the Riemann hypothesis holds if and only if $\varphi(\lambda)\not=\infty,$ for all $\lambda\in \Omega,$ which is equivalent to the statement that $\zeta(\lambda)\not=0,$ for $1/2<\Re\zeta<3/2,$ which is equivalent to the statement that $\zeta(\lambda)\not=0,$ for $1/2<\Re\zeta<1.$ Since, the zeros of $\zeta$ in the fundamental strip $0<\Re\zeta<1$ are symmetric with respect to the critical axis $\Re\zeta=1/2,$ the latter statement is equivalent to the statement that the zeros of $\zeta$ in the fundamental strip all lie on the critical axis, which is the Riemann hypothesis.  
\end{proof}


%

{\bf Acknowledgement.}  The second author expresses great gratitude to  Bo-Yong Chen for introducing  the topic of holomorphic motion to her and for his continuous encouragement and support. We thank Petr Vladimirovich  Paramonov and Xavier Tolsa, who generously responded to our request for help regarding rational approximation. We thank Malik Younsi and Thomas Ransford who introduced us to the relation between dimension and complex dynamics. We thank Takeo Ohsawa for bringing to our attention the work of Tomoki  Kawahira on complex dynamics and the Riemann Hypothesis. 
We also thank John Erik Forn{\ae}ss for helpful comments.



\end{document}